\newcommand{\Z}{\mathbb{Z}}
\newcommand{\Q}{\mathbb{Q}}
\newcommand{\R}{\mathbb{R}}
\newcommand{\C}{\mathbb{C}}
\newcommand{\G}{\mathbb{G}}
\newcommand{\bbP}{\mathbb{P}}
\newcommand{\A}{\mathbb{A}}
\newcommand{\val}{\mathrm{val}}
\newcommand{\Berk}{\mathrm{Berk}}
\newcommand{\Trop}{\mathrm{Trop}}
\newcommand{\Gal}{\mathrm{Gal}}
\theoremstyle{plain}
\newtheorem{thm}{Theorem}[section]
\newtheorem{prop}[thm]{Proposition}
\newtheorem{lmm}[thm]{Lemma}
\newtheorem{cor}[thm]{Corollary}
\newtheorem{conj}[thm]{Conjecture}
\newtheorem{rk}[thm]{Remark}
\newtheorem{ex}[thm]{Example}
\newcommand{\poubelle}[1]{}
\begin{document}

\title[Lower bounds for heights ]{Lower bounds for heights on some algebraic dynamical systems}
	
	\author[A. Plessis]{Arnaud Plessis}
	\address[A. Plessis]{Yanqi Lake Beijing Institute of Mathematical Sciences and Applications, Huairou District, Beijing, China. ORCID number: 0000-0003-3678-6036}
	\email{plessis@bimsa.cn}
	
	\author[S. Sahoo]{Satyabrat Sahoo}
	\address[S. Sahoo]{Yau Mathematical Sciences Center (YMSC), Tsinghua University, Haidian  District, Beijing 100084, China. ORCID number: 0000-0002-5995-7208}
	\email{sahoos@mail.tsinghua.edu.cn}
	
	\keywords{Bogomolov property, Dynamical heights, Berkovich projective line}
	\subjclass[2020]{Primary: 11G50, 37P15, 37P05}
	\date{\today}

\maketitle

\begin{abstract}
Let $v$ be a finite place of a number field $K$ and write $K^{nr,v}$ for the maximal field extension of $K$ in which $v$ is unramified. 
The purpose of this paper is split up into two parts. 
The first one generalizes a theorem of Pottmeyer: If $E$ is an elliptic curve defined over $K$ with split multiplicative reduction at $v$, then the N\'eron-Tate height of a non-torsion point $P\in E(\bar{K})$ is bounded from below by $C / e_v(P)^{2 e_v(P)+1}$, where $C>0$ is an absolute constant and $e_v(P)$ is the maximum of all ramification indices $e_w(K(P) \vert K)$ with $w\vert v$. 
Among other things, we refine this result by showing that given a simple abelian variety $A$ defined over $K$ that is degenerate at $v$, the N\'eron-Tate height of a non-torsion point $P\in A(\bar{K})$ is at least $C / \mathrm{lcm}_{w\vert v} \{e_w(K(P)\vert K)\}^2$, where $C>0$ is an absolute constant. We then give applications towards Lehmer's conjecture.
Next, we provide the first examples of polynomials $\phi\in K[X]$ of degree at least $2$ so that the canonical height $\hat{h}_\phi$ of any point in $\bbP^1(K^{nr,v})$ is either $0$ or bounded from below by an absolute positive constant. 
\end{abstract}

\section{Introduction}
As usual, we denote by $\bar{K}$ an algebraic closure of a field $K$. Throughout this article, \textit{algebraic dynamical system} refers to a data $(V/K, \mathcal{L}, \phi)$, where $V$ is a projective variety defined over a number field $K$, $\mathcal{L}$ is a line bundle in the Picard group of $V$ and $\phi$ is a $K$-endomorphism of $V$. 

Let $(V/K, \mathcal{L}, \phi)$ be an algebraic dynamical system. 
In a prominent paper, Call and Silverman constructed a (canonical) height $\hat{h}_{\mathcal{L}, \phi} : V(\bar{K}) \to \R$ if $\phi^*\mathcal{L} \simeq \mathcal{L}^{\otimes q}$ for some integer $q\geq 2$ \cite{CS93}. 
Concretely, it is the unique Weil height function $\hat{h}$ for $\mathcal{L}$ satisfying $\hat{h}\circ \phi = q \hat{h}$ on $V(\bar{K})$. 
When $\mathcal{L}$ is ample, the function $\hat{h}_{\mathcal{L}, \phi}$ is Galois invariant, non-negative, vanishes precisely at the set of preperiodic points under $\phi$ (a point in $V(\bar{K})$ is said to be preperiodic under $\phi$ if its forward orbit under $\phi$ is finite) and Northcott's theorem holds: the set of points in $V(\bar{K})$ with bounded degree and bounded height is finite. 
Assume from now that $\mathcal{L}$ is ample.

An important (and hard) problem in diophantine geometry/dynamical systems is to understand the points of $V(\bar{K})$ with small Call-Silverman height. For instance, the celebrated Yuan's equidistribution theorem \cite{Y08} roughly states that the Galois orbit of such a point is equidistributed around the (Berkovich) Julia set of $\phi$, unless some natural algebraic obstructions occur. 
For some algebraic dynamical systems, we also have deep conjectures providing informations on the non-preperiodic points of small Call-Silverman height, like, for instance, Bogomolov conjecture, Lang's conjecture or Lehmer's conjecture. 

Another approach consists in understanding the (algebraic) fields $L$ for which $V(L)$ has no point of small Call-Silverman height, except the preperiodic points.
Following Fili and Miner \cite{FM15}, we say that $V(L)$ has the \textit{Bogomolov property} relative to $\hat{h}_{\mathcal{L}, \phi}$ if there is a positive constant $C$ such that $\hat{h}_{\mathcal{L},\phi}(P)\geq C$ for all non-preperiodic points $P\in V(L)$. 
In addition, if $V(L)$ contains only finitely many preperiodic points under $\phi$, then $V(L)$ is said to have the \textit{strong Bogomolov property} relative to $\hat{h}_{\mathcal{L}, \phi}$. 
Northcott's theorem above ensures that $V(L)$ has the strong Bogomolov property relative to $\hat{h}_{\mathcal{L}, \phi}$ for all number fields $L$ containing $K$. 
Hence, this property is only relevant for fields of infinite degree.
Up to our knowledge, this approach is studied in only three cases so far:

\begin{enumerate} 
\item $V/K=\bbP^1/\Q, \mathcal{L}=\mathcal{O}(1)$, the dual of the tautological line bundle over $\bbP^1$, and $\phi(X)=X^2$. 
Hence, $\hat{h}_{\mathcal{L}, \phi}$ is the (absolute, logarithmic) Weil height.
 \item $V/K=A/K$ is an abelian variety, $\mathcal{L}$ is a symmetric ample line bundle, and $\phi=[2]$ is the multiplication-by-$2$ map.
In that case, $\hat{h}_{\mathcal{L}, \phi}$ is the so-called N\'eron-Tate height, which we simply denote by $\hat{h}_\mathcal{L}$ from now. 
\item $V/K=\bbP^1/K, \mathcal{L}=\mathcal{O}(1)$ and $\phi$ is any rational function of $K(X)$ with degree at least $2$. 
We then get a dynamical height and we put $\hat{h}_\phi = \hat{h}_{\mathcal{L}, \phi}$.
\end{enumerate}
(We identified the set of $K$-endomorphisms of $\bbP^1$ with $K(X)$). 
Clearly, the first case fits into the third one. 
But the Weil height offers properties which are easier to exploit than those of an arbitrary dynamical height, and so it makes sense to consider it as a separate case (for instance, the set of preperiodic points under $X^2$ is $0$ and the set of all roots of unity, and the decomposition of the Weil height into a sum of N\'eron local height functions is more explicit \cite[Theorem 2.3]{CS93}).

In the next three subsections, we will establish a (non-exhaustive) state-of-the-art (as well as our main results) of each one of these three cases.
Throughout this introduction, $K$ denotes an arbitrary number field. 

\subsection{Bogomolov property relative to the Weil height}
For short, we say that a field $L$ has the Bogomolov property if $\bbP^1(L)$ has the Bogomolov property relative to the Weil height. 
This notation was first introduced by Bombieri and Zannier \cite{BZ01}. 
This subsection mainly lists examples of fields with the Bogomolov property.

In 1973, Schinzel established that $\Q^{tr}$, the maximal totally real field extension of $\Q$,  has the Bogomolov property \cite{S73}; it is the first known example of an algebraic field of infinite degree satisfying this property.

Another important result due to Amoroso and Zannier claims that $K^{ab}$, the maximal abelian extension of $K$, has the Bogomolov property \cite{AZ00}. 
For other examples of fields having the Bogomolov property, see \cite{ADZ14, F22, G16, G15, H13, P19, P24, P15b}.

End this subsection by giving an example of an algebraic field not satisfying the Bogomolov property.
For any finite place $v$ of $K$, write $K^{nr,v}$ for the maximal field extension of $K$ in which $v$ is unramified.  
It is easy to see that this field does not have the Bogomolov property. 
Indeed, let $p\geq 2$ be a rational prime not lying under $v$ and choose a positive integer $n$. 
Kummer's theory asserts that $v$ is unramified in $K(p^{1/p^n})$, that is, $p^{1/p^n}\in K^{nr,v}$. 
The claim follows since the Weil height of $p^{1/p^n}$, which is $(\log p)/p^n$, goes to $0$ as $n\to +\infty$. 

\subsection{Lower bounds for the N\'eron-Tate height on abelian varieties} \label{ss-section 1.2}
We now consider the case $(2)$ above of which we keep the notation. 
The literature concerning the Bogomolov property in the abelian case is much less extensive than the toric one (apart from elliptic curves, the authors only know three results). 

An immediate consequence of an equidistribution theorem of Zhang states that $A(K\Q^{tr})$ has the strong Bogomolov property relative to $\hat{h}_\mathcal{L}$ \cite{Z98}. 
This is the first known example regarding the Bogomolov property on abelian varieties. 

The analogue of Amoroso and Zannier's theorem above was showed by Baker and Silverman  \cite{BS04}: $A(K^{ab})$ has the Bogomolov property relative to $\hat{h}_\mathcal{L}$. 
Unlike Zhang, their proof does not involve equidistribution, but is based on obtaining metric estimates, which is the other classical method to get lower bounds for the Weil and N\'eron-Tate heights. 

The last example, due to Gubler, shows that there are algebraic fields $L$, not having the Bogomolov property, such that $A(L)$ has the strong Bogomolov property relative to $\hat{h}_\mathcal{L}$.  
More precisely, if $A$ is totally degenerate at some finite place $v$ of $K$ (see Subsection \ref{ss-section 2.2} for the definition), then $A(K^{nr,v})$ has the strong Bogomolov property relative to $\hat{h}_\mathcal{L}$ \cite[Corollary 6.7]{G07b}. 
Like Zhang, this result arises from an equidistribution theorem \cite[Corollary 6.6]{G07b}.
For more examples about the Bogomolov property in the elliptic case, we refer the reader to \cite{BP05, H13, P22, P24}. 

 A decade after Gubler, Pottmeyer stated a more precise claim in the elliptic case. 
The proof relies on a discrepancy computation. 
Given a finite extension $L/K$ and a finite place $w$ of $L$, we put $e_w(L\vert K)$ to be the ramification index of $w$ in $L/K$. 

\begin{thm} [Pottmeyer, \cite{P15}, Theorem 4.1]   \label{thm Pottmeyer}
Let $E$ be an elliptic curve defined over a number field $K$ with split multiplicative reduction at some finite place $v$ of $K$. 
Let $\mathcal{L}$ be a symmetric ample line bundle on $E$. 
Then there is an effective computable constant $C>0$ such that 
\begin{equation} \label{eqPot}
 \hat{h}_\mathcal{L}(P) \geq \frac{ C \cdot e^{2e_v(P)}}{e_v(P)^{2e_v(P)+1}}
 \end{equation}
  for all non-torsion points $P\in E(\bar{K})$, where $e_v(P)$  denotes the maximum of all ramification indices $e_w(K(P)\vert K)$ with $w$ a place of $K(P)$ extending $v$. 
\end{thm}  

(Note that we used Stirling equivalent in the formulation of this theorem). 
The lower bound in \eqref{eqPot} is super-exponential in $e_v(P)$. 

Let $A$ be an abelian variety defined over $K$.
For any field extension $L/K$, write $A_L$ for the base change of $A$ to $L$. 
Given a finite place $v$ of $\bar{K}$, we say that $A_{\bar{K}}$ is \textit{degenerate} at $v$ if $A_{\bar{K}}$ does not have good reduction at $v$. 
Next, an abelian variety defined over $\bar{K}$ is said to be \textit{simple} if $\{0\}$ is its only proper abelian subvariety. 

Among other things, the first aim of this text is to extend Theorem \ref{thm Pottmeyer} to simple abelian varieties that are degenerate at some finite place of $\bar{K}$.
In addition, the lower bound that we obtain is at most exponential in $e_v(P)$, so much sharper.
The least common multiple of a finite number of positive integers $a_1,\dots,a_n$ is denoted by $\mathrm{lcm}\{a_1,\dots,a_n\}$.

\begin{thm} \label{thm Abel Var}
Let $A$ be an abelian variety defined over a number field $K$, let $v$ be a finite place of $K$, and let $\mathcal{L}$ be a symmetric ample line bundle on $A$. 
Assume that for every simple abelian subvariety $B$ of $A_{\overline{K}}$, there exists a place $v_B$ of $\overline{K}$ such that $B$ is degenerate at $v_B$. Then there is an absolute constant $C>0$ such that 
 \begin{equation} \label{eqthm1.2}
 \hat{h}_{\mathcal{L}} (P) \geq C \cdot l_v(P)^{-2}
 \end{equation}
 for all non-torsion points $P \in A(\bar{K})$. Here, $l_v(P)=\mathrm{lcm}_{w|v}\{e_w(K(P)|K)\}$, where $w$ runs over all places of $K(P)$ lying over $v$.
\end{thm}

\begin{rk} \rm{The inequality $\mathrm{lcm}\{1,2,\dots,e_v(P)\}\leq 3^{e_v(P)}$ \cite{H72} yields $\hat{h}_\mathcal{L}(P)\geq C 9^{-e_v(P)}$ for all non-torsion points $P\in A(\bar{K})$, and so the lower bound in \eqref{eqthm1.2} is at most exponential in $e_v(P)$. 
Note that the inequality $l_v(P) \leq 3^{e_v(P)}$ is far from being precise since $e_w(K(P)\vert K)$ cannot always freely run the range $\{1,\dots,e_v(P)\}$ (for instance, their sum must be less or equal than the degree of $P$ over $K$). }
\end{rk}

\begin{rk} \rm{Unlike Theorem \ref{thm Pottmeyer}, the constant $C$ in \eqref{eqthm1.2} is not explicit. 
The reason is that we use a tropical equidistribution theorem, which is not qualitative, rather than an explicit discrepancy computation. 
In fact, the proof of \eqref{eqthm1.2} only relies on tropical arguments and we state in Theorem \ref{thm trop} a tropical version of Theorem \ref{thm Abel Var}.}
\end{rk}

\begin{rk} \rm{In particular, $A(K^{nr,v})$ has the Bogomolov property relative to $\hat{h}_\mathcal{L}$ since it is the set of points $P \in A(\bar{K})$ for which $e_v(P)=1$. 
If $A$ is totally degenerate at $v$, then $A_{\bar{K}}$ is totally degenerate at any place of $\bar{K}$ lying over $v$, and so all abelian subvarieties of $A_{\overline{K}}$ are totally degenerate at any place of $\bar{K}$ lying over $v$ \cite[Lemma 6.1(a)]{G07b}. 
Theorem \ref{thm Abel Var} is therefore a generalization of Gubler's theorem above, and it is even stronger since there exist simple abelian varieties that are degenerate at some finite place of $\bar{K}$, but not totally degenerate. 
Also, we can apply Theorem \ref{thm Abel Var} to elliptic curves with non-split multiplicative reduction at $v$.  }
\end{rk}

\begin{rk} \label{rk 1.10}  \rm{The converse of Theorem \ref{thm Abel Var} is true up to a finite extension of $K$. 
The proof closely follows that of Pottmeyer in the elliptic case \cite[Proposition 5.6]{P15}. 
Indeed, suppose that there is a simple abelian subvariety $B$ of $A_{\bar{K}}$ such that $B$ has good reduction at any place of $\bar{K}$ extending $v$. 
Choose a number field of definition $L$ of $B$ for which the latter has good reduction at all places of $L$ lying over $v$ and such that $B(L)$ contains a non-torsion point. 
Let $P\in B(L)$ be a non-torsion point, and let $p\geq 2$ be a rational prime not lying under $v$. 
For every $n\geq 1$, denote by $P_n$ any point of $B(\overline{K})$ satisfying $[p^n]P_n=P$. 
Let $w$ be a place of $L$ extending $v$. 
The Chevalley-Weil theorem \cite[Proposition 10.3.10]{BG06} asserts that $w$ is unramified in $L(P_n)$, that is, $P_n\in B(L^{nr,w})$. 
It becomes clear that $P_n\in B\left(L^{nr,w} \right)$ is a non-torsion point such that $\hat{h}_{\mathcal{L}}(P_n)=\hat{h}(P)/p^{2n}\to 0$.}
\end{rk}

The lower bound in \eqref{eqthm1.2} is precise enough to find new applications towards Lehmer's conjecture below, see \cite[Conjecture 0.2]{BS04} or \cite{M84}.

\begin{conj} [Lehmer's conjecture] \label{Lehmer conjecture} 
Let $A$ be an abelian variety defined over a number field $K$, and let $\mathcal{L}$ be a symmetric ample line bundle on $A$. 
Then there is a constant $C>0$ such that
				$$\hat{h}_{\mathcal{L}}(P) \geq C D^{-1/g_0(P)}$$ for all non-torsion points $P \in A(\bar{K})$, where $D$ denotes the degree of the extension $K(P)/K$ and $g_0(P)$ denotes	the dimension of the smallest algebraic subgroup of $A$ containing $P$.
\end{conj}
 
 Let $P\in A(\bar{K})$ be a non-torsion point of degree $D$ over $K$. 
 All constants $C$ stated below are positive and independent of $P$. 
 Set $g$ to be the dimension of $A$.

 In \cite{DH00}, David and Hindry proved that if $A$ has complex multiplication (CM) and if $g_0(P)=g$, then for all $\varepsilon >0$, there is $C_\varepsilon>0$ such that $\hat{h}_\mathcal{L}(P)\geq C_\varepsilon D^{-(1/g) -\varepsilon}$.
In \cite{M84}, Masser proved that $\hat{h}_{\mathcal{L}}(P) \geq C D^{-(2g+6+(2/g))}$. Later in \cite{M86}, Masser improved this bound to $ C D^{-(2g+1)} (\log{D})^{-2g}$ (and even $C D^{-2} (\log D)^{-1}$ if $A$ is CM). 
 
If $A=E$ is an elliptic curve, David proved that $\hat{h}_\mathcal{L}(P) \geq C D^{-15/8} (\log(2D))^{-2}$ provided that the $j$-invariant of $E$ is not an algebraic integer \cite{D97}. 

In the esteemed work of Amoroso and Masser \cite{AM16}, the authors first studied Lehmer-type bounds for any $\alpha \in \bar{\Q}^\times$ of infinite order such that $\Q(\alpha)$ is a Galois extension of $\Q$. 
In \cite{GM17}, Galateau and Mah\'e proved that $\hat{h}_\mathcal{L}(P) \geq C D^{-1}$ if $A=E$ is an elliptic curve and if $K(P)/K$ is Galois. In \cite{KS24}, Kumar and the second author proved that $\hat{h}_{\mathcal{L}}(P) \geq C (D\log D)^{-2g}$ if $K(P)/K$ is Galois.

Let $v$ be a finite place of $K$. 
It is well-known that $e_w(K(P) \vert K)$ is independent of the place $w$ of $K(P)$ lying over $v$ if $K(P)/K$ is Galois. 
As a corollary of Theorem~\ref{thm Abel Var}, we have the following result.

\begin{cor} \label{Leh type low bound for abel var}
	Keep the same notations and assumptions as in Theorem~\ref{thm Abel Var}. 
	Then there exists an absolute constant $C>0$ such that 
	\begin{equation*}
		\hat{h}_{\mathcal{L}} (P) \geq C \cdot e(K(P)|K)^{-2} \geq C D^{-2},
	\end{equation*}
for all non-torsion points $P \in A(\bar{K})$ such that $K(P)/K$ is Galois of degree $D$, where $e(K(P)|K)$ is the ramification index of any place of $K(P)$ extending $v$. 
\end{cor}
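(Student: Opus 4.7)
The plan is to deduce this directly from Theorem \ref{thm Abel Var}, using only two elementary facts about ramification in Galois extensions. First I would recall that if $L/K$ is a Galois extension of number fields and $v$ is a finite place of $K$, then $\mathrm{Gal}(L/K)$ acts transitively on the places of $L$ lying above $v$; since the ramification index is preserved under this Galois action, all the $e_w(L\vert K)$ with $w\vert v$ coincide. Applying this to $L=K(P)$ (which is Galois over $K$ by hypothesis) shows that the set $\{e_w(K(P)\vert K) : w\vert v\}$ consists of a single value, namely the common ramification index $e(K(P)\vert K)$, and therefore
\[
\mathrm{lcm}_{w\vert v}\{e_w(K(P)\vert K)\} \;=\; e(K(P)\vert K).
\]

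Next I would plug this equality into the lower bound from Theorem \ref{thm Abel Var}, which immediately produces the first inequality
\[
\hat{h}_{\mathcal{L}}(P) \;\geq\; \frac{C}{\mathrm{lcm}_{w\vert v}\{e_w(K(P)\vert K)\}^2} \;=\; \frac{C}{e(K(P)\vert K)^2}.
\]
For the second inequality, I would use the standard fact that for any finite extension $L/K$ and any place $w$ of $L$ above a place $v$ of $K$, one has $e_w(L\vert K) \leq [L:K]$ (the ramification index divides, hence is bounded by, the local degree, which in turn is bounded by the global degree). In our Galois setting with $L=K(P)$ and $[L:K]=D$, this gives $e(K(P)\vert K)\leq D$, and squaring and inverting yields $C/e(K(P)\vert K)^2 \geq C/D^2$.

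There is really no obstacle here: the corollary is a formal consequence of Theorem \ref{thm Abel Var} combined with the transitivity of the Galois action on places above $v$. The only thing worth being careful about is that the statement of Theorem \ref{thm Abel Var} is in terms of the lcm of the ramification indices (not the max $e_v(P)$), because the intermediate bound $C/e(K(P)\vert K)^2$ in the corollary is a strict strengthening over what one would obtain from the weaker bound $C/9^{e_v(P)}$. Using the lcm-form is therefore essential.
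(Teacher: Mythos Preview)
Your proposal is correct and matches the paper's own argument: just before stating the corollary, the paper notes that $e_w(K(P)\vert K)$ is independent of $w\vert v$ when $K(P)/K$ is Galois, and then declares the result an immediate consequence of Theorem~\ref{thm Abel Var}. Your write-up simply makes explicit the two trivial steps (the lcm collapses to the common ramification index, and $e\leq D$) that the paper leaves implicit.
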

   \begin{rk} \rm{The lower bound $CD^{-2}$ in Corollary~\ref{Leh type low bound for abel var} improves the recent Lehmer-type lower bound $ C (D\log D)^{-2g}$ in \cite{KS24} stated above, at least for abelian varieties considered in Theorem \ref{thm Abel Var}. 
Note that it is independent of the dimension $g$ of $A$. 
 To get the lower bound $CD^{-2}$ for all abelian varieties, it remains to establish it for all geometrically simple abelian varieties with good reduction everywhere (the proof is an easy adaptation of arguments presented in Subsection \ref{ss-section 3.2}, which are mainly based on the Poincar\'e's reducibility theorem). }
   \end{rk}
\begin{rk}	\rm{We only used the tropical geometry to obtain the Lehmer-type bound in Corollary~\ref{Leh type low bound for abel var}, which seems to be a novel approach. Consequently, it may be worthwhile to investigate this innovative concept to ascertain whether the tropicalization of closed subvarieties of an abelian variety could provide deeper results towards Lehmer's conjecture.}
\end{rk}
  	
\subsection{Bogomolov property relative to a dynamical height}

Given an algebraic extension $L/K$ and a rational function $\phi\in K(X)$ of degree at least two, it is a hard task to know whether  $\bbP^1(L)$ has the Bogomolov property relative to $\hat{h}_\phi$. 
In view of the first two subsections, it is natural to consider the case $K=\Q$ and $L=\Q^{tr}$ first, which was solved by Pottmeyer \cite{P13}: $\bbP^1(\Q^{tr})$ has the Bogomolov property relative to $\hat{h}_\phi$ if and only if $\Q^{tr}$ has only finitely many preperiodic points under $\phi$. 
 
 Again, the previous subsections naturally lead to now handling the case $L=K^{ab}$.
 In \cite{P13}, Pottmeyer also proved that $\bbP^1(K^{ab})$ has the Bogomolov property relative to $\hat{h}_\phi$ when $\phi$ is a Chebyshev polynomial. 
 Very recently, Looper got the same conclusion when $\phi$ is a polynomial with bad reduction at some finite place $v$ of $K$, and with a superattracting finite periodic point \cite{L21} (unpublished). 
 Her proof (mainly) combines a discrepancy computation with an equidistribution theorem that she proved in a remarkable previous paper \cite{L21b}.  
 Note that so far, we do not know whether there is a rational function $\phi\in K(X)$ of degree at least two so that $\bbP^1(K^{ab})$ does not have the Bogomolov property with respect to $\hat{h}_\phi$.
 
 Finally, what about the third "classical" case, namely $L=K^{nr,v}$, where $v$ is a finite place of $K$? 
We saw that this field does not have the Bogomolov property and that $\bbP^1(K^{nr,v})$ has the Bogomolov property relative to $\hat{h}_\phi$ if $\phi$ is a Latt\`es map of an elliptic curve $E$ with split multiplicative reduction at $v$ \cite{P15} (it is a direct consequence of Theorem \ref{thm Pottmeyer} since if $\alpha\in\bar{K}$, then $\hat{h}_\phi(\alpha)=2\hat{h}_\mathcal{L}(P)$ for any $P\in E(\bar{K})$ with abscissa $\alpha$, where $\mathcal{L}$ is the unique ample generator of the N\'eron-Severi group of $E$). 
 Up to our knowledge, nothing further is known about this case. 
 
The second goal of this paper is to provide much more examples.  
  The most general result that we will show is Theorem \ref{thm general}, which can be rephrased as follows: If the (Berkovich) Julia set of $\phi$ has an "atypical" element, then $\bbP^1(K^{nr,v})$ has the strong Bogomolov property relative to $\hat{h}_\phi$. 
However, the Julia set of $\phi$ is very abstract and in practice, it is not trivial to determine whether such an element exists (actually, it does not if, for instance, $\phi$ has good reduction at $v$, see Remark \ref{rk 3.1}). 
Nevertheless, if the Newton polygon of the polynomial $\phi(X)-X\in K_v[X]$ has some properties, then it is possible to exhibit an "atypical" element in the Julia set of $\phi$, which therefore proves that $\bbP^1(K^{nr,v})$ has the strong Bogomolov property relative to $\hat{h}_\phi$.
The connection between Newton polygons and the Julia set of $\phi$ will be explicitly stated in Subsection \ref{ss-section 4.4}. 

Let $\psi(X)=\sum_{i=0}^d a_i X^i\in K_v[X]$ be a polynomial with coefficients in $K_v$ such that $a_da_0\neq 0$. 
The Newton polygon of $\psi$ is defined to be the lower boundary of the convex hull of the set of points $(i, -\log \vert a_i\vert_v)$, ignoring the points with $a_i=0$, where $\vert . \vert_v$ denotes the normalized $v$-adic absolute value on $K_v$.  
  
\begin{thm} \label{thm Berk}
Choose a finite place $v$ of a number field $K$.
Let $\phi \in K[X]$ be a polynomial of degree $d\geq 2$ with leading coefficient $a_d$ such that $\phi(0)$ is non-zero. 
Denote by $\mu_1,\dots, \mu_r$ the slopes of the line segments of the Newton polygon of $\phi(X)-X$, viewed as a polynomial over $K_v$.  
If there is $l\in\{1,\dots,r\}$ satisfying \[ \mu_l\notin \frac{\log p}{e_v(K\vert\Q)}  \Z \; \; \;  \text{and} \; \; \; \mu_l\geq -\frac{\log \vert a_d\vert_v}{d-1},\]  then $\bbP^1(K^{nr,v})$ has the strong Bogomolov property relative to $\hat{h}_\phi$. 
\end{thm}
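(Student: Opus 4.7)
The plan is to exhibit a classical (type I) point of the Berkovich Julia set $\mathcal{J}(\phi)$ that does not lie in $K^{nr,v}$, and then invoke Theorem~\ref{thm general}. The candidate will be a repelling fixed point of $\phi$ with prescribed $v$-adic absolute value $e^{\mu_l}$.

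By Newton polygon theory applied to $\phi(X)-X \in K_v[X]$, there exists a root $\alpha \in \overline{K_v}$ with $\log|\alpha|_v = \mu_l$. Because $\phi \in K[X]$, we may take $\alpha \in \overline{K}$ via a fixed embedding $\overline{K} \hookrightarrow \overline{K_v}$; by construction $\phi(\alpha) = \alpha$. The value group of the completion of $K^{nr,v}$ at the extension of $v$ agrees with that of $K_v$, namely $\tfrac{\log p}{e_v(K|\Q)}\Z$, where $p$ is the residue characteristic. The hypothesis $\mu_l \notin \tfrac{\log p}{e_v(K|\Q)}\Z$ therefore forces $K_v(\alpha)/K_v$ to be ramified, so $\alpha \notin K^{nr,v}$.

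To show $\alpha \in \mathcal{J}(\phi)$, I would verify that $\alpha$ is repelling, i.e., $|\phi'(\alpha)|_v > 1$. Factoring $\phi(X)-X = a_d \prod_{i=1}^d (X-\alpha_i)$ with $\alpha_1 = \alpha$ gives $\phi'(\alpha)-1 = a_d \prod_{i \geq 2}(\alpha-\alpha_i)$. Factors coming from roots $\alpha_i$ with $|\alpha_i|_v \neq |\alpha|_v$ are pinned down by ultrametricity as $\max(|\alpha|_v,|\alpha_i|_v)$; combining this with the Newton polygon widths of the other slopes and with the hypothesis $\mu_l \geq -\log|a_d|_v/(d-1)$, one obtains $|a_d \prod_{i \geq 2}(\alpha-\alpha_i)|_v > 1$. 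The ultrametric inequality then yields $|\phi'(\alpha)|_v = |\phi'(\alpha)-1|_v > 1$, so $\alpha$ is a repelling fixed point and lies in $\mathcal{J}(\phi)$. With $\alpha$ an atypical element of the Julia set, Theorem~\ref{thm general} delivers the strong Bogomolov property for $\bbP^1(K^{nr,v})$ with respect to $\hat{h}_\phi$.

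The main obstacle is the derivative lower bound. Roots $\alpha_i$ which share the slope $\mu_l$ with $\alpha$ only satisfy $|\alpha-\alpha_i|_v \leq |\alpha|_v$, and the inequality can be strict, which a priori could spoil the product estimate. A careful choice of $\alpha$ among the roots of slope $\mu_l$, together with a residue-field argument exploiting that $\mu_l \notin \tfrac{\log p}{e_v(K|\Q)}\Z$ rules out accidental cancellations (the value $e^{\mu_l}$ is not realized in $K^{nr,v}$, so none of the differences $\alpha-\alpha_i$ can sit in a smaller ball defined over $K^{nr,v}$), should secure the strict inequality $|\phi'(\alpha)|_v > 1$ in all cases.
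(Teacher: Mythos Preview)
Your strategy has a genuine gap: the fixed point $\alpha$ need not be repelling, so it need not lie in $J_\phi$ as a Type~I point. The hand-waved ``residue-field argument'' cannot be made to work, and a single example kills it. Take $K=\Q$, $p=2$, and $\phi(X)=X^2+\tfrac12$. Then $\phi(0)\neq 0$, $a_d=1$, and $\phi(X)-X=X^2-X+\tfrac12$ has Newton polygon consisting of a single segment of slope $\mu_1=(\log 2)/2$; this slope satisfies both hypotheses of the theorem. The two fixed points are $\alpha=(1\pm i)/2$, with $|\alpha|_2=\sqrt2$, and $\phi'(\alpha)=2\alpha=1\pm i$ has $|\phi'(\alpha)|_2=|1\pm i|_2=2^{-1/2}<1$. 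Thus \emph{both} fixed points on the unique slope are attracting and lie in the Fatou set, not the Julia set. Note also that here $|\alpha_1-\alpha_2|_2=|i|_2=1<|\alpha|_2$, exactly the ``accidental cancellation'' you flagged; there is no alternative choice of $\alpha$ and no residue-field obstruction to rule it out.

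The paper sidesteps this issue entirely by not asking for $\alpha\in J_\phi$. Since $\alpha$ is fixed, it trivially lies in the filled Julia set $\mathcal K_\phi$. The paper proves (Proposition~\ref{prop 4.1}) that $J_\phi$ coincides with the set of $\preceq$-maximal points of $\mathcal K_\phi$, so the maximal element $m_\phi(\alpha)=\zeta_{\alpha,r}$ on the path $[\alpha,\infty]\cap\mathcal K_\phi$ lies in $J_\phi$. A direct bound (Lemma~\ref{lmm 4.5}) gives $r\le |a_d|_v^{-1/(d-1)}$, and the hypothesis $\mu_l\ge -\log|a_d|_v/(d-1)$ is used precisely to guarantee $r\le |\alpha|_v$, so that $\zeta_{\alpha,r}(X)=\max\{r,|\alpha|_v\}=|\alpha|_v\notin p^{\Z/e_v(K|\Q)}$. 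Theorem~\ref{thm general} then applies to $\zeta_{\alpha,r}$, which is typically a Type~II or~III point rather than $\alpha$ itself. In short, the second hypothesis on $\mu_l$ is there to control the \emph{radius} of a Berkovich disc, not to force a derivative estimate.
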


\begin{ex} \rm{Let $p\geq 2$ be a rational prime and take a polynomial $\phi(X) = a_dX^d +\dots+a_2X^2 + X +a_0 \in \Q[X]$ of degree $d\geq 2$ such that $\vert a_0 \vert_p=p$ and $\vert a_i\vert_p=1$ for all $i\geq 2$. 
The Newton polygon of $\phi(X)-X$, viewed as a polynomial in $\Q_p[X]$, is made of two line segments. The first one joins points $(0,-\log p)$ and $(2,0)$ while the second one connects points $(2,0)$ and $(d,0)$. 
The slope of the first line segment is $(\log p)/2$, which clearly satisfies the conditions of the theorem. 
Thus, $\bbP^1(\Q^{nr,p})$ has the strong Bogomolov Property relative to $\hat{h}_\phi$. }
\end{ex}

\subsection*{Acknowledgement} It is a real pleasure to thank Gao, Gubler, Poineau, Silverman and Yamaki for nicely answering our questions. 
We also thank an anonymous referee for pointing out a mistake in a previous formulation of Lehmer's conjecture. 

\section{Analytification and tropicalization}

We fix throughout this section an algebraically closed field $\mathbb{K}$ which is complete with respect to a non-trivial non-archimedean absolute value $\vert .\vert$. 
The aim of this section is to briefly explain the concepts of analytification and tropicalization.
For simplicity's sake, we only expose the cases that we will use in the sequel of the text. 

\subsection{Analytification} \label{ss-section 2.1}
Analytification is roughly the process of universally turning a "bad" algebraic space into a "nice" analytic space. 
Formally, let $X$ be an affine variety defined over $\mathbb{K}$, that is, $X = \mathrm{Spec} R$, where $R$ is a commutative $\mathbb{K}$-algebra of finite type without nilpotent elements. 
In that case, the analytification of $X$, denoted by $X^{an}$, is the set of multiplicative seminorms on $R$ extending the absolute value on $\mathbb{K}$. 
In other words, $X^{an}$ is the set of maps $\zeta : R \to \R_{\geq 0}$ such that 
\begin{enumerate}
\item (extension) $\zeta(a)= \vert a\vert$ for all $a\in\mathbb{K}$; 
\item (multiplicativity) $\zeta(PQ) = \zeta(P)\zeta(Q)$ for all $P,Q\in R$; 
\item (triangle inequality) $\zeta(P+Q) \leq \zeta(P) + \zeta(Q)$ for all $P,Q\in R$. 
\end{enumerate}
The set $X^{an}$ is called the \textit{Berkovich analytic space} associated to $X$. 
We endow $X^{an}$ with the Berkovich topology, that is, the coarsest one for which the maps $\zeta \mapsto \zeta(P)$ are continuous for every $P\in R$. 
The special case $R=\C_v[x]$ will be studied in more detail in Subsection \ref{ss-section 4.1} (note that $X$ is then the affine line over $\C_v$). 

If $X$ is now assumed to be a projective variety defined over $\mathbb{K}$, then it has a finite open affine covering $\{U_i\}_{i=1}^n$ and we can define the analytification $X^{an}$ of $X$ by glueing the spaces $U_1^{an}, \dots, U_n^{an}$. 

Whatever if $X$ is affine or projective, the set of its $\mathbb{K}$-rational points embeds into $X^{an}$ and its image is dense in $X^{an}$. 
In addition, $X^{an}$ is Hausdorff (the singletons are therefore closed) and $X^{an}$ is arcwise connected if and only if $X$ is connected. 

The general construction of Berkovich analytic spaces can be found in \cite[Appendix C]{BR10} or \cite{B90}. 
Here we followed what was explained by Gubler in \cite[\S 2]{GU13}.

\begin{ex} \label{ex 2.1}  \rm{Let $A$ be an abelian variety defined over $\mathbb{K}$. 
There is an analytic group $E$ as well as a surjective homomorphism $E \to A^{an}$ of analytic groups whose kernel $M$ is a lattice in $E(\mathbb{K})$. In other words, $A^{an} \simeq E/M$. 
For more details, we refer the reader to \cite[\S 4]{GU10} or \cite[\S 3.1]{YA}. }
\end{ex}

\subsection{Tropicalization} \label{ss-section 2.2}
Tropicalization is roughly the process of transferring algebraic geometry into convex geometry. 
Let $\G_m^n$ denote the multiplicative group of dimension $n$ over $\mathbb{K}$. 
We fix coordinates $x_1,\dots,x_n$ of $\G_m^n$ so that $\G_m^n = \mathrm{Spec} \; \mathbb{K}[x_1^{\pm}, \dots, x_n^{\pm}]$. 
By the previous subsection, every element $\zeta \in (\G_m^n)^{an}$ is a multiplicative seminorm on $\mathbb{K}[x_1^{\pm}, \dots, x_n^{\pm}]$ extending $\vert .\vert$ on $\mathbb{K}$. 
In particular, $\zeta(x_j)\neq 0$ for all $j\in\{1,\dots,n\}$ since $1= \zeta(1)=\zeta(x_j/x_j)= \zeta(x_j) \zeta(x_j^{-1})$. 

Let $A$ be an abelian variety defined over $\mathbb{K}$, and let $E$ and $M$ be as in Example \ref{ex 2.1}. 
There are a unique non-negative integer $n$ as well as a unique abelian variety up to isomorphism such that 
\begin{equation} \label{eq1}
 1 \longrightarrow (\G_m^n)^{an} \longrightarrow E \xrightarrow{q^{an}} B \longrightarrow 0
 \end{equation} 
is a short exact sequence (the so-called Raynaud extension of $A$). 
The integer $n$ is called the \textit{torus rank} of $A$. 
We say that $A$ has \textit{good reduction} if $n=0$, is \textit{degenerate} if $n>0$ and is \textit{totally degenerate} if $n$ equals the dimension of $A$. 
The last three definitions coincide with those introduced in Subsection \ref{ss-section 1.2} when $\mathbb{K}$ is the completion of $\overline{K}$ with respect to $v$. 

The exact sequence \eqref{eq1} is locally trivial over $B$, that is, there exists a  formal covering $\{V_i\}_{i\in I}$ of $B$ such that $(q^{an})^{-1}(V_i) \simeq V_i \times (\G_m^n)^{an}$ as $(\G_m^n)^{an}$-torsors for all $i\in I$. 
Composing this isomorphism with the second projection gives a map $r_i : (q^{an})^{-1}(V_i) \to (\G_m^n)^{an}$. 
Recall that $\zeta(x_j)\neq 0$ for all $j\in\{1,\dots,n\}$ and all $\zeta\in (\G_m^n)^{an}$. 
We thus get a well-defined map  \[\begin{array}{ccccc}
 &  & (q^{an})^{-1}(V_i) & \to & \R^n \\
 & & z & \mapsto & (-\log r_i(z)(x_1),\dots, -\log r_i(z)(x_n) ) \\
\end{array}.\]
These morphisms patch together to form a continuous surjective group homomorphism $\mathrm{val} : E \to \R^n$. 
The lattice $M$ maps to a lattice $\Lambda=\mathrm{val}(M)$ in $\R^n$. 
Example \ref{ex 2.1} then provides a continuous surjective group homomorphism $\overline{\mathrm{val}} : A^{an} \to \R^n/\Lambda$.
The subgroup $\overline{\val}^{-1}(\overline{\mathbf{0}})$ of $A^{an}$ is an analytic subgroup.

Finally, the tropicalisation of a closed subvariety $X$ of $A$ is defined as the set $\Trop(X)=\overline{\val}(X^{an})$. 
If $X$ is connected, then $X^{an}$ is also connected by Subsection \ref{ss-section 2.1}. 
The direct image of a connected space under a continuous function being connected, we infer that $\Trop(X)$ is connected. 
For more details concerning this subsection, the reader can consult \cite[\S 4.2]{GU10} or \cite[\S 3.1]{YA}.

\section{Tropical version of Theorem \ref{thm Abel Var}}
We fix once and for all a number field $K$ as well as a finite place $v$ of $K$. 
For any algebraic extension $L/K$ and any finite place $w$ of $L$, we denote by $L_w$ the $w$-adic completion of $L$. 
It is well-known that the normalized absolute value $\vert . \vert_w$ on $L_w$ uniquely extends to an absolute value on $\overline{L_w}$. 
We now set $\C_w$ to be the completion of $\overline{L_w}$ with respect to this absolute value. 
We call again $\vert . \vert_w$ for the unique absolute value on $\C_w$ extending $\vert . \vert_w$ on $L_w$. 
Note that $(\overline{K})_w$ is complete (by definition) and algebraically closed (a consequence of Krasner's lemma), and so $(\overline{K})_w=\C_w$. 
 
Next, given an algebraic variety $X$ defined over $L$, we simply denote by $X_w$ the base change of $X$ to $\C_w$. 
For any abelian variety $A$ and any integer $m$, we denote by $[m] : A \rightarrow A$ the multiplication-by-$m$ map.

The goal of this section is to prove the tropical version of Theorem \ref{thm Abel Var} below. 

\begin{thm} \label{thm trop}
Let $A$ be an abelian variety defined over $K$, and let $\mathcal{L}$ be a symmetric ample line bundle on $A$. 
Assume that for all simple abelian subvarieties $B$ of $A_{\overline{K}}$, there is a finite place $v_B$ of $\overline{K}$ extending $v$ such that $\Trop(X_{v_B})$ has cardinality at least two for all irreducible closed subvarieties $X$ of $B$ with positive dimension.
Then there is a constant $C>0$ such that \[\hat{h}_{\mathcal{L}} (P) \geq C \cdot l_v(P)^{-2}\] 
 for all non-torsion points $P \in A(\bar{K})$. Here, $l_v(P)=\mathrm{lcm}_{w|v}\{e_w(K(P)|K)\}$, where $w$ runs over all places of $K(P)$ lying over $v$.
\end{thm}

The bridge between the arithmetic nature of Theorem \ref{thm Abel Var} and the geometric nature of Theorem \ref{thm trop} is ensured by a lemma of Yamaki, which we will prove in the first subsection of this paragraph.
The second subsection shows the theorem when $A$ is geometrically simple, that is, $A_{\overline{K}}$ is a simple abelian variety. 
Finally, we will show Theorem \ref{thm trop} in full generality thanks to Poincar\'e' s reducibility theorem.

The main idea of the proof consists in contradicting the tropical equidistribution theorem.
If the conclusion of Theorem \ref{thm trop} is false, then we can find a sequence $(P_m)_m$ such that $\hat{h}_\mathcal{L}(Q_m)\to 0$, where $Q_m= [ l_v(P_m)] P_m$. 
Up to an infinite subsequence, the tropical equidistribution theorem claims that the sequence of terms $S_m=\overline{\val}(\Gal(\bar{K}/K). Q_m)$ is equidistributed around $\Trop(X_{v_B})$, where $X$ is some irreducible closed subvariety of positive dimension of the simple abelian variety $B=A_{\overline{K}}$.
We will then show that the cardinality of $S_m$ cannot exceed some fixed integer, which contradicts the fact that $\Trop(X_{v_B})$ is a connected set of cardinality at least two, and so infinite. 

\subsection{A lemma due to Yamaki}
Let $B$ be a simple abelian variety defined over $\overline{K}$, which is degenerate at some finite place $w$ of $\overline{K}$, and let $X$ be an irreducible closed subvariety of $B$.
In \cite[Lemma 7.15]{YA}, Yamaki proved that the tropicalization of $X_w$ is one point if and only if $X$ is one point.
However, he proved it when $K$ is a function field, but the proof remains exactly the same in the number field case. 
As it is short, we decided to reproduce it for the convenience of the reader. 

\begin{lmm} [Yamaki's lemma] \label{Yamaki}
Let $B$ be a simple abelian variety defined over $\overline{K}$ and assume that $B$ is degenerate at some finite place $w$ of $\overline{K}$.  
Let $X$ be an irreducible closed subvariety of $B$. 
Then $\Trop(X_w)$ consists to a single point if and only if $X$ is a singleton. 
\end{lmm}

\begin{proof}
The converse is clear since if $X$ is a singleton, then $(X_w)^{an}=X_w$ (the singleton $X_w$ is closed and dense in $(X_w)^{an}$ by Subsection \ref{ss-section 2.1}), and therefore $\Trop(X_w)=\overline{\val}(X_w)$ consists to a single point.  

Let us show the forward direction. 
Let $P\in X(\overline{K})\subset X^{an}$. 
As $\Trop(X_w)$ has cardinality one by assumption, we infer that it is equal to $\overline{\val}(P)$. 
Now, consider the irreducible closed subvariety $Y=X-P$ of $B$. 
By Subsection \ref{ss-section 2.2}, the map $\overline{\val}$ is a group homomorphism. 
Hence $\Trop(Y_w)=\overline{\val}(X_w-P)= \overline{\val}(X_w)-\overline{\val}(P)=\{\overline{\mathbf{0}}\}$. 
In other words, by replacing $X$ with $Y$ if needed, we can assume without loss of generality that $0\in X$, and so $\Trop(X_w)= \{\overline{\mathbf{0}}\}$.

Put $\langle X\rangle$ to be the smallest abelian subvariety of $B$ containing $X$. 
Let us consider, for each positive integer $l$, a morphism \[ \begin{array}{ccccc} 
 &  & X^{2l} & \to & B \\ 
& & (x_1,x_2,\dots,x_{2l-1}, x_{2l}) & \mapsto & \sum_{i=1}^l x_{2i-1} - x_{2i} \\ 
\end{array}\]  
and write $X_l$ for its image. It is an irreducible closed subvariety of $B$, which is contained in $\langle X\rangle$ since the latter contains all linear combinations of points in $X$. 
Taking $x_2=0$ leads to $X\subset X_1$. Similarly, take $x_{2l-1}=x_{2l}=0$ yields $X_{l-1}\subset X_l$. 
Since each $X_l$ is irreducible, there is an integer $j$ such that $X_l = X_j$ for all $l\geq j$. 
By definition, we have $X_l+X_m \subset X_{l+m}$, $0\in X_l$ and $-X_l=X_l$ for all positive integers $l$ and $m$. 
Consequently, $\bigcup_{l\geq 1} X_l = X_j$ is an algebraic subgroup of the abelian variety $\langle X\rangle$. 
Since $X_j$ is irreducible, it follows that it is an abelian variety. 
Finally, $\langle X \rangle = X_j$ since $\langle X\rangle$ is the smallest abelian subvariety of $B$ containing $X$. 

Let $G$ be an analytic subgroup of $(B_w)^{an}$ containing $X_w$. 
Thus,  it contains all linear combinations of points in $X_w$; whence $(X_j)_w\subset G$. 
Hence, $\langle X \rangle_w=(X_j)_w$ is the smallest analytic subgroup of $(B_w)^{an}$ containing $X_w$. 

We have $\Trop(X_w)= \{\overline{\mathbf{0}}\}$ by assumption; whence $X_w \subset \overline{\val}^{-1}(\overline{\mathbf{0}})$. 
Define $n$ as the torus rank of $B_w$. 
We have $n\geq 1$ since $B_w$ is degenerate by assumption. 
Subsection \ref{ss-section 2.2} asserts that the map $\overline{\val} : B_w^{an} \to \R^n / \Lambda$ is continuous, surjective and $\overline{\val}^{-1}(\overline{\mathbf{0}})$ is an analytic subgroup of $(B_w)^{an}$. 
Thus, $\overline{\val}^{-1}(\overline{\mathbf{0}})$ is a proper closed set in $(B_w)^{an}$.
The analytic group $B_w$ is dense in $(B_w)^{an}$ by Subsection \ref{ss-section 2.1}.
From all this, we infer that $B_w$ is not included in $\overline{\val}^{-1}(\overline{\mathbf{0}})$. 
Finally, $B_w$ and  $\overline{\val}^{-1}(\overline{\mathbf{0}})$ are analytic groups containing $X_w$, and so $\langle X \rangle_w\subset \overline{\val}^{-1}(\overline{\mathbf{0}}) \cap B_w \subsetneq B_w$. 
As $B_w$ is simple, we conclude that $\langle X\rangle_w =\{0\}$, which ends the proof of the lemma. 
\end{proof}

\textbf{Proof of Theorem \ref{thm Abel Var} by assuming Theorem \ref{thm trop}:}  
Let $B$ be a simple abelian subvariety of $A_{\overline{K}}$. 
By assumption, $B$ is degenerate at $v_B$. 
Yamaki's lemma claims that $\Trop(X_{v_B})$ has cardinality at least two for all irreducible closed subvarieties $X$ of $B$ with positive dimension. 
We now end the proof by using Theorem \ref{thm trop}. 
\qed 

\subsection{Proof of Theorem \ref{thm trop}: A special case} \label{Section 3}
We assume in this subsection that $B=A_{\bar{K}}$ is a simple abelian variety. 
For brevity, set $\nu=v_B$.  
We now fix the field embedding $\bar{K} \hookrightarrow \overline{K_v}$ associated to $\nu$ and we now see $\bar{K}$ as a subfield of $\overline{K_v}$. 

Example \ref{ex 2.1} provides a uniformization $A_v^{an}= E/M$ for some analytic group $E$ and some lattice $M$ in $E(\C_v)$.
The abelian variety $A$ is defined over $K$, and so over $K_v$. 
Thus, \cite[\S1]{BL91} asserts the existence of a finite extension $F/K_v$ for which the equality $A(L)= E(L)/M$ is true for all algebraic extensions $L/F$. 
In particular, $M\subset E(F)$. 
Moreover, this reference also claims that the map $\val$ defined in Subsection \ref{ss-section 2.2} maps $E(L)$ to $(\log \vert L^\times \vert_v)^n$, where $\vert L^\times\vert_v$ denotes the value group of $L$.
Hence, $\Lambda=\val(M) \subset (\log \vert F^\times\vert_v)^n$ and $\overline{\val}$ maps $A(L)$ to $(\log \vert L^\times \vert_v)^n/\Lambda$.
From now, set $p\geq 2$ to be the rational prime lying under the fixed place $v$. 

\begin{lmm}
	\label{int. lemma1}
	The set $S=\{\overline{\mathrm{val}}([l_v(P)]P), P\in A(\bar{K})\}$ is finite.
	\end{lmm}
\begin{proof}
Take $P\in A(\bar{K})$. 
Then $\overline{\val}(P)=(\log r_1, \dots, \log r_n)+\Lambda$ for some $r_i$ in $\vert F(P)^\times\vert_v = p^{\Z/e(F(P) \vert \Q_p)}$, where $e(L'\vert F')$ denotes the ramification index of a finite extension of local fields $L'/F'$.
As the map $\overline{\val}$ is a group homomorphism, we get
	\begin{equation} \label{eq 2}
	\overline{\val}([l_v(P)]P)= l_v(P) \cdot \overline{\val}(P) = \left(s_1,\dots,s_n\right)+ \Lambda
\end{equation}	
with $s_i= \log (r_i^{l_v(P)})$ for all $i$. 
Note that $s_i \in l_v(P) \cdot \log p \cdot  (\Z/e(F(P)| \Q_p))$. 

	Let $\nu_P$ be the place of $K(P)$ associated to the fixed embedding $\bar{K} \hookrightarrow \overline{K_v}$. 
Since $l_v(P)$ is a multiple of $e_{\nu_P}( K(P) | K) = e(K_v(P) | K_v)$, we deduce that \[s_i \in \frac{\log p}{e(F(P)\vert K_v(P))e(K_v\vert \Q_p)} \cdot \Z.\]
As $e(F(P)\vert K_v(P))\leq e(F\vert K_v)$, and so $e(F(P)\vert K_v(P))$ divides $e(F\vert K_v)!$, we get $s_i \in \log p \cdot (\Z/e)$, where $e=e(F| K_v)!e(K_v| \Q_p)$, which is independent of $P$. 
The finiteness of $S$ now arises from \eqref{eq 2} since $\Lambda$ is a lattice defined over $ \log p \cdot (\Z/ e(F\vert \Q_p))$.
\end{proof}

	\subsection*{Proof of Theorem~\ref{thm trop}:}
	Assume by contradiction that for all integers $m\geq 1$, there exists a non-torsion point $P_m \in B(\bar{K})= A(\bar{K})$ such that 
	\[ \hat{h}_{\mathcal{L}} (P_m) < \frac{1}{m \cdot l_v(P_m)^2}.\]	For each $m$, put  $Q_m= [l_v(P_m)]P_m$. Then $\hat{h}_\mathcal{L} (Q_m) \rightarrow 0$ as $m \rightarrow \infty$. 
	
There are a subsequence $(Q_{\phi(m)})_m$ of $(Q_m)_m$ and an irreducible closed subvariety $X$ of $B$ such that $Q_{\phi(m)}\in X(\bar{K})$ for all $m$ and such that no infinite subsequence of $(Q_{\phi(m)})_m$ is contained in a proper closed subvariety of $X$. 
Indeed, if no infinite subsequence of $(Q_m)_m$ is contained in a proper closed subvariety of $B$, then we take $Q_{\phi(m)}=Q_m$ and $X=B$.  
Otherwise, since any projective variety is a finite union of irreducible closed subvarieties, the pigeonhole principle provides an infinite subsequence $(Q_{\psi(m)})_m$ of $(Q_m)_m$ and an irreducible closed subvariety $Y\subsetneq B$ such that $Q_{\psi(m)}\in Y(\bar{K})$ for all $m$. 
The dimension of $Y$ being less than that of $B$, we can then repeat this argument for at most finitely many times and the claim follows.

Let $m$ be an integer. 
For brevity, put $R_m=Q_{\phi(m)}$ and $O_m$ to be the Galois orbit of $R_m$ over $K$.
For any $\sigma \in \mathrm{Gal}(\bar{K}/K)$, we have \[l_v(\sigma P_m) = \mathrm{lcm}_{w\vert v}\{e_w(K(\sigma P_m)\vert K)\} = \mathrm{lcm}_{w\vert v}\{e_{\sigma^{-1} w}(K(P_m)\vert K)\}= l_v(P_m),\] where $w$ runs over all prime ideals of $K(\sigma P_m)$ lying over $v$. 
In particular, Lemma~\ref{int. lemma1} ensures us that $\overline{\mathrm{val}}(\sigma R_m) = \overline{\mathrm{val}}([l_v(P_{\phi(m)})] \sigma P_{\phi(m)})\in S$, and so $\overline{\mathrm{val}}(O_m) \subset S$.

If $X$ has dimension $0$, then it is a point since it is connected, and so the sequence $(R_m)_m$ is constant. 
As $\hat{h}_\mathcal{L}(R_m)\to 0$, we conclude that $\hat{h}_\mathcal{L}(R_m)=0$, that is, $R_m$ is a torsion point, a contradiction. 
In conclusion, $X$ has positive dimension. 
	   
Since the sequence $(R_m)_m$ has no infinite subsequence which is contained in a proper closed subvariety of $X$, the tropical equidistribution theorem \cite[\S 1]{GU10} shows that the sequence of discrete probability measures 
\[ \mu_m = \frac{1}{[K(R_m) : K]} \sum_{x\in O_m} \delta_{\overline{\val}(x)}\] weakly converges to a regular probability measure $\mu$ on $\R^n/\Lambda$ with support equals to $\Trop(X_\nu)$. 
Here, $\delta_{\overline{\val}(x)}$ is the Dirac measure supported at the singleton $\{\overline{\val}(x)\}$. 

On the one hand, $\Trop(X_\nu)$ has cardinality at least two by assumption and is connected according to Subsection \ref{ss-section 2.2}; it is therefore infinite. 
On the other hand, $S$ is a finite set by Lemma \ref{int. lemma1}. 
We can thus find an element $\zeta$ in $\Trop(X_\nu) \backslash S$ as well as a continuous function $f : \R^n/\Lambda \to [0,2]$ taking the value $0$ on $S$ and $1$ at $\zeta$. 
By continuity of $f$, there is a (open) neighbourhood $U$ of $\zeta$ in $\R^n/\Lambda$ such that $f(\zeta')\geq 1/2$ for all $\zeta'\in U$. 
With this choice of $f$, we clearly have $\mu_m(f)=0$ since $\overline{\val}(O_m) \subset S$, while $\mu(f) \geq \int_U f(t) d\mu(t)  \geq \mu(U)/2$. 
We derive to $\mu(U)=0$, a contradiction since any open set in $\R^n/\Lambda$ containing at least one point in $\Trop(X_\nu)$, the support of the measure $\mu$, has positive measure. \qed

\subsection{Proof of Theorem \ref{thm trop}: Full generality} \label{ss-section 3.2}  
We now show the theorem in full generality. 
 This step is quite classic and we closely follow the exposition of \cite[\S 2]{BS04}.

Denote by $A_1,\dots,A_r$ the simple abelian subvarieties of $A_{\overline{K}}$. 
According to Poincar\'e's reducibility theorem \cite[Theorem 8.9.3]{BG06}, $A_{\overline{K}}$ is isogenous to the Cartesian product $B=\prod_{i=1}^r A_i^{e_i}$ for some positive integers $e_i$. 
All these abelian varieties, as well as the isogeny, are defined over a finite Galois extension $K'$ of $K$.  

Let $\phi : B \to A$ and $\psi : A \to B$ be $K'$-isogenies satisfying $\phi \circ \psi = [m]$ on $A$ for some integer $m\geq 1$. 
Clearly, $\phi^*\mathcal{L}$ is a symmetric ample line bundle on $B$ and its restriction to the $i$-th factor $\mathcal{L}_i = \phi^*\mathcal{L} \vert_{A_i}$ is a symmetric ample line bundle on $A_i$. 

Choose a non-torsion point $P\in A(\overline{K})$ and put $\psi(P)=(P_1,\dots, P_r)$. 
At least one of these coordinates is not a torsion point, let us say $P_k$.
The basic properties of N\'eron-Tate heights, see \cite[Chapter 9]{BG06}, give  \[\hat{h}_\mathcal{L}(P) = \frac{1}{m^2} \hat{h}_\mathcal{L}([m]P) = \frac{1}{m^2} \hat{h}_{\phi^*\mathcal{L}}(\psi(P)) = \frac{1}{m^2} \sum_{i=1}^r \hat{h}_{\mathcal{L}_i} (P_i) \geq \frac{\hat{h}_{\mathcal{L}_k}(P_k)}{m^2}. \]
 
 Denote by $v'$ the place of $K'$ lying under $v_{A_k}$. 
 As Theorem \ref{thm trop} is true for geometrically simple abelian varieties, we then get \[ \hat{h}_\mathcal{L}(P) \geq \frac{C/m^2}{\mathrm{lcm}_{w\vert v'}\{e_w(K'(P_k)\vert K')\}^2}, \]
 where $C>0$ is an absolute constant and where $w$ ranges over all places of $K'(P_k)$ lying over $v'$.
 Let $w$ be any place of $K'(P)$. 
As $\psi$ is a $K'$-morphism of varieties, we have $P_k\in A_k(K'(P))$.
The ramification index being multiplicative in towers, we infer that $e_w( K'(P_k) \vert K')$ divides $e_w(K'(P) \vert K')$. 
Next, $K' / K$ is Galois, which implies that $e_w(K'(P) \vert K(P))$ divides $e_w(K'\vert K)$.
It arises from the equality \[e_w(K'(P)\vert K(P))e_w(K(P)\vert K)=e_w(K'(P) \vert K')e_w(K'\vert K)\] that  $e_w(K'(P)\vert K')$ divides $e_w(K(P)\vert K)$. 
In conclusion, $e_w( K'(P_k) \vert K')$ divides $e_w(K(P)\vert K)$. 
The theorem follows since $v'$ is a place lying over $v$, and so\[ \hat{h}_\mathcal{L}(P) \geq \frac{C/m^2}{\mathrm{lcm}_{w\vert v'}\{e_w(K'(P_k)\vert K')\}^2} \geq \frac{C/m^2}{\mathrm{lcm}_{w\vert v}\{e_w(K(P)\vert K)\}^2}. \qed \]

\section{Proof of Theorem \ref{thm Berk}} \label{Section 4}

Recall that each rational function $\phi\in K(X)$ of degree at least $2$  provides an  algebraic dynamical system $(\bbP^1/K, \mathcal{O}(1), \phi)$, and so a Call-Silverman height $\hat{h}_\phi$.

The aim of this section is to provide a large criterion, which ensures us that $\bbP^1(K^{nr,v})$ has the strong Bogomolov property relative to $\hat{h}_\phi$, see Theorem \ref{thm general}.
The proof is done in the second subsection. However, this criterion does not allow us to easily deduce concrete examples. To get it, we will show in the third subsection that the Julia set of a polynomial $\psi\in K[X]$ matches with the set of "maximum points" in the filled Julia set of $\psi$, see Proposition \ref{prop 4.1}. The proof is mainly based on the numerous topological properties of the Berkovich projective line (which are no longer true in higher dimension, which explains why we limited ourselves to the case of the line), which are summarized in the first subsection.  Finally, the fourth (and last) subsection is devoted to the proof of Theorem \ref{thm Berk}.

\subsection{The Berkovich projective line} \label{ss-section 4.1}  
The Berkovich affine line $\A_\Berk^1$ over $\C_v$ is the set of all multiplicative  seminorms on $\C_v[X]$ extending $\vert . \vert_v$ on $\C_v$. 
For example, if $(a,r)\in \C_v \times \R_{\geq 0}$, then \[ \begin{array}{ccccc} 
\zeta_{a,r} & : &  \C_v[X] & \to & \R \\ 
& & P & \mapsto & \underset{z \in D(a,r)}{\mathrm{Sup}} \{ |P(z)|_v \}\\ 
\end{array}\]  
belongs to $\A_\Berk^1$, where $D(a,r)$ is the closed disc in $\C_v$ with radius $r$ and centered at $a$.
We say $\zeta_{a,r}$ is a point of Type I if $r=0$, Type II if $r\in p^{\Q}$ and Type III if $r\in \R_{>0} \backslash p^\Q$. 
We can identify $a$ with the seminorm $\zeta_{a,0}$, and thus see $\C_v$ as a subspace of $\A^1_\Berk$. 

More subtly, if $\mathbf{u}=(D(a_n,r_n))_n$ is a decreasing sequence (for the inclusion) of closed discs with empty intersection (such a sequence exists since $\C_v$ is not spherically complete), then \[ \begin{array}{ccccc} 
\zeta_\mathbf{u}  & : &  \C_v[X] & \to & \R \\ 
& & P & \mapsto & \underset{n\to +\infty}{\lim}  \zeta_{a_n,r_n}(P) \\ 
\end{array}\] describes a new element of $\A^1_\Berk$. 
 Such a seminorm is called a point of Type IV.  
Berkovich's classification theorem claims that $\A^1_\Berk$ is the collection of all points of Type I, II, III or IV. 

The Berkovich topology on $\A^1_\Berk$ is the weakest one for which the maps $\zeta \mapsto \zeta(P)$ are continuous for all $P\in \C_v[X]$. 
The Berkovich affine line is Hausdorff, locally compact, uniquely path-connected, and contains $\C_v$ as a dense subset.
Hence, its one point compactification, called the Berkovich projective line and denoted by $\bbP^1_\Berk$, is Hausdorff, compact, uniquely path-connected and contains $\bbP^1(\C_v)$ as a dense subset.  
The point at infinity is denoted with $\infty$. 
Given $\zeta\in \bbP^1_\Berk$, we put $[\zeta, \infty]$ to be the unique arc from $\zeta$ to $\infty$. 
For example, if $\zeta=\zeta_{a,r}$ is a point of Type I,II or III, then $[\zeta, \infty]=\{\zeta_{a,s}, s\in [r,+\infty]\}$ with the convention that $\zeta_{a,+\infty}=\infty$. 

We now endow $\bbP^1_\Berk$ with a partial order $\preceq$ defined as follows: $\zeta \preceq \zeta'$ if and only if $\zeta'$ lies in $[\zeta, \infty]$. 
The maximum element is the point at infinity and the minimal elements are precisely the points of Type I or IV. 
It can be proven that $\zeta \preceq \zeta'$ if and only if $\zeta(P) \leq \zeta'(P)$ for all polynomials $P\in \C_v[X]$. 
 
The closed Berkovich disc with radius $r\geq 0$ and centered at $a\in \C_v$ is the set \[ \mathcal{D}(a,r)=\{\zeta\in \A^1_\Berk, \zeta \preceq \zeta_{a,r} \}.\] 
It is a connected space  with boundary $\{\zeta_{a,r}\}$. 
As for closed discs in $\C_v$, the intersection of two closed Berkovich discs is either empty or one is contained in the other.
For all the above statements on the Berkovich projective line, see \cite{PT21}. \\

All facts asserted from now, and until the end of this subsection, can be found in \cite[Chapter 10]{BR10}. Given a rational function $\phi\in \C_v(X)$ of degree $d\geq 2$, we can endow $\bbP^1_\Berk$ with a canonical probability measure $\mu_\phi$ and its support $J_\phi$ is called the (Berkovich) Julia set of $\phi$. 
Denote by $O_v$ the ring of integers of $\C_v$ and by $O_v^\times$ the set of units in $O_v$.  
We say that $\phi$ has \textit{good reduction} if $\phi(X)=F_2(X,1)/F_1(X,1)$, where $F_1, F_2 \in O_v[X,Y]$ are two homogeneous polynomials of degree $d$ whose resultant belongs to $O_v^\times$. 
When $\phi$ has good reduction, the probability measure $\mu_\phi$ is the Dirac mass supported at $\zeta_{0,1}$, and so $J_\phi$ is the singleton $\{\zeta_{0,1}\}$. 

This theoretical definition of $J_\phi$ is suitable to obtain our main result, but it does not allow us to exhibit any concrete example, like Theorem \ref{thm Berk}, and so we need a more explicit definition. If $\phi\in\C_v[X]$ is supposed to be a polynomial, then $J_\phi$ is the boundary of the (Berkovich) filled Julia set \[ \mathcal{K}_\phi = \bigcup_{M>0}  \{ \zeta\in \bbP^1_\Berk, \; \; \;  \zeta(\phi^m)\leq M \; \; \; \text{for all} \; \; \; m\geq 1 \} , \] where $\phi^m$ denotes the $m$-fold iteration of $\phi$. 
In other words, $\mathcal{K}_\phi$ is the set of all $\zeta\in\bbP^1_\Berk$ for which the sequence $(\zeta(\phi^m))_m$ stays bounded as $m$ goes to infinity.  
Clearly, $\mathcal{K}_\phi$ is a compact subset of $\bbP^1_\Berk$ not containing $\infty$.    

We will use this definition of $J_\phi$ to prove Theorem \ref{thm Berk}. 
Unfortunately, it does not correspond in full generality with the original if we drop the assumption $\phi\in \C_v[X]$, which limits the applications of Theorem \ref{thm general} below. 
Nevertheless, Riveira-Letellier proved that for all $\phi\in \C_v(X)$ of degree at least $2$, the Julia set of $\phi$ is the closure of the set of repelling points of $\phi$ in $\bbP^1_\Berk$. 
But the authors do not know how to use this in order to provide more concrete examples of Theorem \ref{thm general}. 

\subsection{Statement and proof of the main result} 
We now state our main result.

\begin{thm} \label{thm general}
Let $\phi\in K(X)$ be a rational function of degree at least $2$. 
If there is an element $\zeta\in J_\phi$ such that $\zeta(X)\notin p^{\Z/e_v(K\vert \Q)} \cup \{0\}$, then $\bbP^1(K^{nr,v})$ has the strong Bogomolov property relative to $\hat{h}_\phi$. 
\end{thm}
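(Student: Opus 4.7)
The plan is to parallel the proof of Theorem \ref{thm Abel Var}: assume strong Bogomolov fails, extract an infinite sequence of points of small canonical height, apply an equidistribution theorem on $\bbP^1_\Berk$, and use $\zeta$ to produce an open set of positive $\mu_\phi$-mass that is missed by every Galois-orbit measure.

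First I would assume, for contradiction, that there is a sequence of pairwise distinct $P_n \in \bbP^1(K^{nr,v})$ with $\hat{h}_\phi(P_n)\to 0$; any failure of the strong Bogomolov property yields such a sequence, either from infinitely many preperiodic points or from a non-preperiodic sequence of heights tending to $0$. Northcott's theorem for $\hat{h}_\phi$ then gives $[K(P_n):K]\to \infty$ after extracting a subsequence, and one may assume $P_n\neq \infty$. At this stage I would invoke the Berkovich equidistribution theorem for small points on $\bbP^1_\Berk$: the Galois-orbit probability measures
\[
\nu_n \;=\; \frac{1}{|O(P_n)|}\sum_{Q\in O(P_n)}\delta_Q,\qquad O(P_n):=\mathrm{Gal}(\overline{K}/K)\cdot P_n,
\]
converge weakly on $\bbP^1_\Berk$ to the canonical measure $\mu_\phi$, whose support is $J_\phi$.

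Next, I would use the hypothesis on $\zeta$ to manufacture a witnessing open set. Setting $t:=\zeta(X)>0$, by assumption $t\notin p^{\Z/e_v(K\vert\Q)}\cup\{0\}$; since the latter set is closed in $\R_{\geq 0}$, there is $\epsilon>0$ such that $(t-\epsilon,t+\epsilon)$ is disjoint from it. The set
\[
U \;:=\; \{\xi\in\A^1_\Berk : \xi(X)\in(t-\epsilon,t+\epsilon)\}
\]
is then open in $\bbP^1_\Berk$ and contains $\zeta\in \mathrm{supp}(\mu_\phi)$, so $\mu_\phi(U)>0$. Conversely, since $K^{nr,v}/K$ is Galois, every Galois conjugate $Q$ of any $P_n$ lies in $K^{nr,v}$, and the fixed embedding $\overline{K}\hookrightarrow \C_v$ sends $Q$ into an unramified extension of $K_v$; hence $|Q|_v$ belongs to the value group of $K_v$, which is precisely $p^{\Z/e_v(K\vert\Q)}\cup\{0\}$. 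Identifying $Q$ with the Type~I point $\zeta_{Q,0}$, we get $\zeta_{Q,0}(X)=|Q|_v\notin(t-\epsilon,t+\epsilon)$, so $\nu_n(U)=0$ for every $n$. Portmanteau's theorem applied to the open set $U$ then produces $0=\liminf_n \nu_n(U)\geq \mu_\phi(U)>0$, the desired contradiction.

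The main step I expect to require care is the $p$-adic bookkeeping which forces $|Q|_v\in p^{\Z/e_v(K\vert\Q)}\cup\{0\}$ for every Galois conjugate $Q$ of $P_n$. This is where the hypothesis $P_n\in K^{nr,v}$ really enters, and it plays the role Lemma \ref{int. lemma1} plays in the abelian case: constraining the Galois orbits to land in a set of admissible valuations that misses an open neighborhood of the support point $\zeta$ of the limit equidistribution measure.
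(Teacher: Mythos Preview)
Your proposal is correct and follows essentially the same route as the paper: assume a small sequence exists, invoke equidistribution of Galois orbits on $\bbP^1_\Berk$ toward $\mu_\phi$, and contradict it using the fact that every conjugate $Q$ of a point in $K^{nr,v}$ has $|Q|_v\in p^{\Z/e_v(K\vert\Q)}\cup\{0\}$ while $\zeta(X)$ does not. The only cosmetic differences are that the paper packages the valuation constraint via a closed set $S$ and separates $\zeta$ from it with Urysohn's lemma and a test function, whereas you work directly with the open annulus $U$ and Portmanteau; also, your Northcott step is harmless but unnecessary, since pairwise distinctness already suffices for equidistribution on $\bbP^1$.
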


\begin{proof}
Let $(P_n)_n$ be any sequence of pairwise distinct points in $\bbP^1(K^{nr,v})$ such that $\hat{h}_\phi(P_n)\to 0$. It suffices to get a contradiction in order to prove the theorem (recall that any preperiodic point $P$ satisfies $\hat{h}_\phi(P)=0$). 
By removing at most two terms if needed, we can assume that $P_n\in \bbP^1(K^{nr,v})\backslash (\{0\}\cup \{\infty\})=(K^{nr,v})^\times $ for all $n$.
Yuan's equidistribution Theorem \cite{Y08} tells us that 

\begin{equation} \label{Yuan1}
 \frac{1}{[K(P_n) : K]} \sum_\sigma f(\sigma P_n) \underset{n\to +\infty}{\longrightarrow} \int_{\bbP^1_\Berk} f(t)d\mu_\phi(t)
 \end{equation}
for all continuous functions $f : \bbP^1_\Berk \to \R$, where $\sigma$ runs over all field embeddings from $K(P_n)$ to $\C$ extending the identity on $K$.  

Let $g$ be the real-valued function on $\bbP^1_\Berk$ defined by $\zeta' \mapsto \mathrm{Min}\{\zeta'(X), \zeta'(X)^{-1}\}$. 
It is continuous by definition of the Berkovich topology. 
Write $S$ for the preimage of the closed set $p^{\Z/e_v(K\vert \Q)} \cup \{0\}$ under $g$; it is therefore a closed set, and so compact, in $\bbP^1_\Berk$.
Note that $\zeta\notin S$ by assumption. 

Let $Q\in (K^{nr,v})^\times$ and choose any place $w$ of $K(Q)$ extending $v$. As $v$ is unramified in $K(Q)$, we get, since the ramification index is multiplicative in towers, $e_w(K(Q) \vert \Q) = e_v(K\vert \Q)$. We infer that $\vert Q\vert_w \in p^{\Z/e_v(K\vert\Q)}$; whence \[ \mathrm{Min}\{ \vert Q \vert_w, \vert Q\vert_w^{-1}\} \in p^{\Z/e_v(K\vert\Q)}.\]  
As $\zeta$ extends $\vert .\vert_v$ on $\C_v$, we get $g(Q)=\mathrm{Min}\{ \vert Q \vert_v, \vert Q\vert_v^{-1}\} \in p^{\Z/e_v(K\vert\Q)}$, that is, $Q\in S$. 
The extension $K^{nr,v}/K$ being Galois, it follows that the Galois orbit of $P_n$ over $K$ is included in $S$ for all $n$. 

By Urysohn's lemma, there is a continuous function $f : \bbP^1_\Berk \to [0,2]$ taking the value $0$ on $S$ and $1$ at $\zeta$. 
By continuity of $f$, there is a (open) neighbourhood $U$ of $\zeta$ in $\bbP^1_\Berk$ such that $f(\zeta')\geq 1/2$ for all $\zeta'\in U$. 
With this choice of $f$, the left-hand side in \eqref{Yuan1} is $0$, while the right-hand side is at least $\int_U f(t) d\mu_\phi(t)  \geq \mu_\phi(U)/2$. 
We derive to $\mu_\phi(U)=0$, a contradiction since any open set in $\bbP^1_\Berk$ containing at least one point in $J_\phi$, the support of the measure $\mu_\phi$, has positive measure.  
\end{proof}

\begin{rk} \label{rk 3.1}
\rm{If $\phi$ has good reduction, then $J_\phi=\{\zeta_{0,1}\}$ according to Subsection \ref{ss-section 4.1} and we have  $\zeta_{0,1}(X)=1 \in p^{\Z/e_v(K\vert\Q)}$. 
The condition of Theorem \ref{thm general} is therefore not satisfied. 
This can be compared to the abelian case since Remark \ref{rk 1.10} tells us that $A(K^{nr,v})$ does not have the Bogomolov property relative to the N\'eron-Tate height if $A$ has good reduction at $v$ and if $A(K^{nr,v})$ contains a non-torsion point. }
\end{rk}
 
\subsection{Another formulation for $J_\phi$}
Fix in this subsection a polynomial $\phi\in\C_v[X]$. 
We saw in Subsection \ref{ss-section 4.1} that the Julia set of $\phi$ is the boundary of $\mathcal{K}_\phi$, the filled Julia set of $\phi$. The aim of this subsection is to explicitly compute this boundary. 
This result is probably already known, but it does not seem easy to find any reference.

For $\zeta\in\A^1_\Berk$, we set $\mathcal{D}(\zeta)=\{\zeta'\in \bbP^1_\Berk, \; \zeta'\preceq \zeta\}$. 
It is a closed set in $\bbP^1_\Berk$. 
More precisely, Subsection \ref{ss-section 4.1} shows that $\mathcal{D}(\zeta)=\{\zeta\}$ if $\zeta$ is a point of Type I or IV (because the latter is a minimal element) and that $\mathcal{D}(\zeta)=\mathcal{D}(a,r)$ if $\zeta=\zeta_{a,r}$ is a point of Type II or III. 
As $\mathcal{D}(a,r)$ has boundary $\{\zeta_{a,r}\}$ by Subsection \ref{ss-section 4.1}, we get:

\begin{lmm} \label{lmm 4.1}
For all $\zeta\in\A^1_\Berk$, the boundary of the set $\mathcal{D}(\zeta)$ is the singleton $\{\zeta\}$. 
\end{lmm}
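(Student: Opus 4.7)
The plan is to split the argument into cases based on the four types of points in Berkovich's classification, taking advantage of the structural description of $\mathcal{D}(\zeta)$ already given in the paragraph preceding the lemma.

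For a point $\zeta$ of Type II or III, the discussion in Subsection~\ref{ss-section 4.1} identifies $\mathcal{D}(\zeta)$ with the closed Berkovich disc $\mathcal{D}(a,r)$ where $\zeta = \zeta_{a,r}$, and the same subsection records that this disc has boundary $\{\zeta_{a,r}\}$. So in these two cases there is essentially nothing left to do.

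The remaining work is for points of Type I and IV, where $\zeta$ is minimal for $\preceq$ and hence $\mathcal{D}(\zeta) = \{\zeta\}$. Since $\bbP^1_\Berk$ is Hausdorff, the singleton $\{\zeta\}$ is closed, so its boundary is either $\{\zeta\}$ (if it has empty interior) or empty (if it is open). I would then show that $\zeta$ is not an isolated point by producing a sequence of Type II or III points converging to $\zeta$: for $\zeta = a \in \C_v$ of Type I, take $\zeta_{a,1/n}$ and observe that $\zeta_{a,1/n}(P) = \sup_{z \in D(a,1/n)} |P(z)|_v \to |P(a)|_v$ for every $P \in \C_v[X]$, by continuity of $|P|_v$; for $\zeta = \zeta_\mathbf{u}$ of Type IV associated to a nested sequence $(D(a_n,r_n))_n$, take $\zeta_{a_n,r_n}$, which converges to $\zeta_\mathbf{u}$ directly from the definition of $\zeta_\mathbf{u}(P)$ as $\lim_n \zeta_{a_n,r_n}(P)$. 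Since the Berkovich topology is the weakest making all evaluation maps $\zeta \mapsto \zeta(P)$ continuous, these pointwise convergences force convergence in $\A^1_\Berk$, proving $\zeta$ is a limit of points distinct from itself.

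The only minor obstacle I anticipate is the Type I case, where one must be careful that the approximating sequence $\zeta_{a,1/n}$ is genuinely different from $\zeta_{a,0} = a$ and that convergence holds for \emph{every} polynomial, not just those with a prescribed form; this is handled cleanly by the maximum modulus principle on the closed disc $D(a,1/n)$. Apart from this, the proof is a direct case analysis that invokes only the topological facts already compiled in Subsection~\ref{ss-section 4.1}.
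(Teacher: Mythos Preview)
Your proposal is correct and matches the paper's approach: the paragraph immediately preceding the lemma performs exactly this case split, invoking Subsection~\ref{ss-section 4.1} for the Type~II/III boundary statement and recording $\mathcal{D}(\zeta)=\{\zeta\}$ for Type~I/IV. The paper leaves the non-isolation of Type~I and~IV points implicit (it follows at once from the unique path-connectedness of $\bbP^1_\Berk$ recorded in Subsection~\ref{ss-section 4.1}, since no singleton in a connected space with more than one point can be open), whereas you spell it out via explicit approximating sequences---either justification is fine.
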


\begin{lmm} \label{lmm 4.2}
Let $\zeta\in \mathcal{K}_\phi$. Then $\mathcal{D}(\zeta)\subset \mathcal{K}_\phi$. 
\end{lmm}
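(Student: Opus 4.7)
The plan is very direct: unwind the definitions of $\preceq$ and $\mathcal{K}_\phi$, and observe that one is tailored to pass through the other.

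First I would note that since $\mathcal{K}_\phi \subset \mathbb{A}^1_\Berk$ (the compact set cannot contain $\infty$), any $\zeta' \in \mathcal{D}(\zeta)$ satisfies $\zeta' \preceq \zeta \prec \infty$, hence $\zeta' \in \mathbb{A}^1_\Berk$ as well. Then I would recall the characterization of $\preceq$ proved in Subsection \ref{ss-section 4.1}: for any $\zeta', \zeta \in \mathbb{A}^1_\Berk$, the relation $\zeta' \preceq \zeta$ is equivalent to the pointwise inequality $\zeta'(P) \leq \zeta(P)$ for every $P \in \C_v[X]$. This is the one identity that does all the work; the breakdown into the four Berkovich types (I, II, III, IV) is already handled there.

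Applying that characterization to the polynomials $P = \phi^m$ for each integer $m \geq 1$, I obtain $\zeta'(\phi^m) \leq \zeta(\phi^m)$. Since $\zeta \in \mathcal{K}_\phi$, there exists $M > 0$ such that $\zeta(\phi^m) \leq M$ for all $m \geq 1$; the same bound $M$ then works for $\zeta'$, so $\zeta' \in \mathcal{K}_\phi$ by the definition of the filled Julia set recalled at the end of Subsection \ref{ss-section 4.1}.

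There is essentially no obstacle: the lemma is a monotonicity statement, and the partial order $\preceq$ is literally defined (in its pointwise form) so that $\zeta \mapsto \zeta(P)$ is order-preserving for each fixed polynomial $P$. The only mild point worth flagging is that $\mathcal{K}_\phi$ involves a single uniform bound $M$ (the defining union is over $M > 0$, with the inequality required for \emph{all} $m$ simultaneously), so I would make sure to say explicitly that the same $M$ witnessing membership of $\zeta$ in $\mathcal{K}_\phi$ also witnesses membership of $\zeta'$, rather than producing an $m$-dependent bound.
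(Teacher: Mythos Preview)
Your proof is correct and follows essentially the same approach as the paper: both invoke the characterization from Subsection~\ref{ss-section 4.1} that $\zeta'\preceq\zeta$ is equivalent to $\zeta'(P)\leq\zeta(P)$ for all $P\in\C_v[X]$, and then specialize to $P=\phi^m$. Your added remark that the same bound $M$ works for $\zeta'$ is a slight elaboration, but the argument is the same.
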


\begin{proof} 
Let $\zeta'\in \mathcal{D}(\zeta)$. 
Subsection \ref{ss-section 4.1} tells us that $\zeta' \preceq \zeta$ implies $\zeta'(P)\leq \zeta(P)$ for all $P\in \C_v[X]$. Taking $P=\phi^m$ proves that the sequence $(\zeta'(\phi^m))_m$ is bounded since $(\zeta(\phi^m))_m$ is bounded by assumption. 
This leads to $\zeta'\in \mathcal{K}_\phi$.  
\end{proof}

Let $\zeta\in \mathcal{K}_\phi$. 
The set $\mathcal{K}_\phi \cap [\zeta, \infty]$ is non-empty, totally ordered since $[\zeta, \infty]$ is, and compact as the intersection of two compact sets in $\bbP^1_\Berk$, which is Hausdorff. 
Hence, it admits a maximum element, say $m_\phi(\zeta)$. 
Write $\mathrm{Max}(\phi)$ for the set of maximum points in $\mathcal{K}_\phi$, that is, the set of $\zeta\in \mathcal{K}_\phi$ for which $m_\phi(\zeta)=\zeta$. 

\begin{lmm} \label{lmm 4.4}
We have $\mathcal{K}_\phi = \bigcup_{\zeta \in \mathrm{Max}(\phi)} \mathcal{D}(\zeta)$. 
\end{lmm}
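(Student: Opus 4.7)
The inclusion $\supseteq$ is immediate: every $\zeta \in \mathrm{Max}(\phi)$ lies in $\mathcal{K}_\phi$ by definition, so Lemma \ref{lmm 4.2} gives $\mathcal{D}(\zeta)\subseteq \mathcal{K}_\phi$.

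For the reverse inclusion $\subseteq$, take an arbitrary $\zeta\in\mathcal{K}_\phi$ and set $\eta := m_\phi(\zeta)$, the unique maximal element of $\mathcal{K}_\phi \cap [\zeta,\infty]$ already constructed in the paragraph preceding Lemma \ref{lmm 4.3}. Since $\eta\in[\zeta,\infty]$ we have $\zeta\preceq\eta$, hence $\zeta\in\mathcal{D}(\eta)$. So the only thing left to verify is that $\eta$ itself is a maximal point of $\mathcal{K}_\phi$, i.e. that $m_\phi(\eta)=\eta$.

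To see this, suppose $\eta'\in\mathcal{K}_\phi$ satisfies $\eta\preceq\eta'$. Because $\bbP^1_\Berk$ is uniquely path-connected, the arc $[\eta,\infty]$ is contained in $[\zeta,\infty]$ (both are the unique paths going up to $\infty$, and $\eta$ itself lies on $[\zeta,\infty]$). Therefore $\eta'\in [\zeta,\infty]$, and combined with $\eta'\in\mathcal{K}_\phi$ this places $\eta'$ in $\mathcal{K}_\phi\cap[\zeta,\infty]$. The maximality of $\eta$ in that set forces $\eta'\preceq \eta$, which together with $\eta\preceq\eta'$ yields $\eta'=\eta$. Hence $\eta\in\mathrm{Max}(\phi)$ and $\zeta\in\mathcal{D}(\eta)$, completing the proof.

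There is essentially no serious obstacle here: the statement is a routine consequence of the partial-order structure on $\bbP^1_\Berk$ and the compactness of $\mathcal{K}_\phi$ already exploited in defining $m_\phi$. The one mild point to be careful about is the set-theoretic inclusion $[\eta,\infty]\subseteq[\zeta,\infty]$, which needs unique path-connectedness of $\bbP^1_\Berk$ (recalled in Subsection \ref{ss-section 4.1}); everything else reduces to unwinding the definitions of $\mathcal{D}(\cdot)$, $m_\phi$, and $\mathrm{Max}(\phi)$.
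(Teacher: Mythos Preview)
Your proof is correct and follows the same approach as the paper's: both directions use Lemma \ref{lmm 4.2} for $\supseteq$ and the map $m_\phi$ for $\subseteq$. The paper simply asserts that $m_\phi(\zeta)\in\mathrm{Max}(\phi)$ ``by definition,'' whereas you spell out the short verification that $m_\phi(m_\phi(\zeta))=m_\phi(\zeta)$ via the inclusion $[\eta,\infty]\subseteq[\zeta,\infty]$; this extra care is harmless and arguably more precise.
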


\begin{proof}
The inclusion $\supset$ arises from Lemma \ref{lmm 4.2} since $\mathrm{Max}(\phi)$ is a subset of $\mathcal{K}_\phi$ by construction. 
Conversely, let $\zeta\in \mathcal{K}_\phi$. 
By definition, $m_\phi(\zeta)$ belongs to $\mathrm{Max}(\phi)$ and we clearly have $\zeta\in\mathcal{D}(m_\phi(\zeta))$ since $\zeta\preceq m_\phi(\zeta)$. 
This shows the other inclusion.
\end{proof}

\begin{lmm} \label{lmm 4.3}
If $\zeta$ and $\zeta'$ are two distinct elements in $\mathrm{Max}(\phi)$, then the sets $\mathcal{D}(\zeta)$ and $\mathcal{D}(\zeta')$ are disjoint.
\end{lmm}

\begin{proof}
Assume that $\zeta$ is a point of Type I or IV. 
Thus, $\mathcal{D}(\zeta)=\{\zeta\}$ and the desired intersection is therefore empty, unless $\zeta\in\mathcal{D}(\zeta')$, that is, $\zeta\preceq \zeta'$.   
As $\zeta$ is a maximal point in $\mathcal{K}_\phi$, and as $\zeta'\in K_\phi \cap [\zeta, \infty]$, it follows that $\zeta=\zeta'$, a contradiction. 
By symmetry, the lemma is also proved when $\zeta'$ is a point of Type I or IV.
We now assume that $\zeta=\zeta_{a,r}$ and $\zeta'=\zeta_{a',r'}$ are points of Type II or III. 
By the foregoing, we have $\mathcal{D}(\zeta)=\mathcal{D}(a,r)$ and $\mathcal{D}(\zeta')=\mathcal{D}(a',r')$. 
By Subsection \ref{ss-section 4.1}, their intersection is empty, unless one contains the other. Suppose that $\mathcal{D}(a,r) \subset \mathcal{D}(a',r')$. 
Again, Subsection \ref{ss-section 4.1} shows that $\zeta=\zeta_{a,r}\preceq \zeta_{a',r'}=\zeta'$ and we get the contradiction as above.
Similarly, we cannot also have $\mathcal{D}(a',r') \subset \mathcal{D}(a,r)$ and the lemma follows. 
\end{proof}

\begin{prop} \label{prop 4.1}
We have $J_\phi =  \mathrm{Max}(\phi)$. 
\end{prop}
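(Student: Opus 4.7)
The proof splits into two inclusions between $J_\phi=\partial \mathcal{K}_\phi$ and $\mathrm{Max}(\phi)$, each relying on the preceding lemmas together with basic features of the Berkovich topology summarised in Subsection \ref{ss-section 4.1}.

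To show $\mathrm{Max}(\phi)\subseteq J_\phi$, I would fix $\zeta\in\mathrm{Max}(\phi)$ and exhibit points outside $\mathcal{K}_\phi$ converging to $\zeta$. By definition of $m_\phi(\zeta)=\zeta$, no element of $[\zeta,\infty]$ strictly above $\zeta$ belongs to $\mathcal{K}_\phi$. Since $[\zeta,\infty]$ is a continuous arc starting at $\zeta$, such points can be chosen arbitrarily close to $\zeta$ in the Berkovich topology. Hence every open neighbourhood of $\zeta$ meets $\bbP^1_\Berk\setminus\mathcal{K}_\phi$, and combined with $\zeta\in\mathcal{K}_\phi$ this yields $\zeta\in\partial\mathcal{K}_\phi=J_\phi$.

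For the reverse inclusion $J_\phi\subseteq \mathrm{Max}(\phi)$, I would argue by contradiction. Suppose $\zeta\in\partial\mathcal{K}_\phi$ with $\zeta\notin \mathrm{Max}(\phi)$; then $\zeta\prec m_\phi(\zeta)$. Points of Type I or IV are minimal for $\preceq$, so $m_\phi(\zeta)$ must be of Type II or III, say $m_\phi(\zeta)=\zeta_{a,r}$, giving $\mathcal{D}(m_\phi(\zeta))=\mathcal{D}(a,r)$. Lemma \ref{lmm 4.2} places this Berkovich disc inside $\mathcal{K}_\phi$, while Lemma \ref{lmm 4.1} identifies its boundary with the single point $\{m_\phi(\zeta)\}$. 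Hence $\zeta$ lies in the interior $\mathcal{D}(m_\phi(\zeta))\setminus\{m_\phi(\zeta)\}$, which provides an open neighbourhood of $\zeta$ entirely contained in $\mathcal{K}_\phi$, contradicting $\zeta\in\partial\mathcal{K}_\phi$.

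The only genuinely topological input is the first direction, where one must use that the arc $[\zeta,\infty]$, being a continuous path, approaches $\zeta$ at its endpoint in the Berkovich topology; this is the step I expect to be the main (small) obstacle, since if the approach of $\zeta_{a,s}\to\zeta_{a,r}$ as $s\searrow r$ were mishandled one could easily overlook the Type~IV case. All other steps are formal manipulations of the relations $\preceq$, $\mathcal{D}(\cdot)$ and $m_\phi$ via Lemmas \ref{lmm 4.1} and \ref{lmm 4.2}; interestingly, the disjointness Lemma \ref{lmm 4.3} and the covering Lemma \ref{lmm 4.4} are not logically needed here, although they illuminate the decomposition of $\mathcal{K}_\phi$ dual to Proposition \ref{prop 4.1}.
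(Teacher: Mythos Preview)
Your proof is correct but follows a genuinely different route from the paper's. The paper argues in a single stroke using all four preparatory lemmas: by Lemma~\ref{lmm 4.4} one writes $\mathcal{K}_\phi=\bigcup_{\zeta\in\mathrm{Max}(\phi)}\mathcal{D}(\zeta)$, by Lemma~\ref{lmm 4.3} this union is disjoint, and then the identity ``boundary of a disjoint union of closed sets equals the union of their boundaries'' combined with Lemma~\ref{lmm 4.1} yields $J_\phi=\partial\mathcal{K}_\phi=\bigcup_{\zeta\in\mathrm{Max}(\phi)}\{\zeta\}=\mathrm{Max}(\phi)$. You instead treat the two inclusions separately using only Lemmas~\ref{lmm 4.1} and~\ref{lmm 4.2} together with the continuity of the arc $[\zeta,\infty]$ at its endpoint; Lemmas~\ref{lmm 4.3} and~\ref{lmm 4.4} are bypassed entirely, as you noticed. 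Your argument is arguably more robust, since the general topological claim that $\partial\bigl(\bigsqcup_i C_i\bigr)=\bigsqcup_i\partial C_i$ for disjoint closed sets is false without further hypotheses (take $\bigcup_{n\geq 1}\{1/n\}$ in $\R$), so the paper's one-line chain tacitly relies on more structure than it states; your inclusion-by-inclusion argument makes the needed topology explicit at each step. On the other hand, the paper's formulation has the virtue of displaying the decomposition of $\mathcal{K}_\phi$ and the computation of its boundary as one transparent chain of equalities.
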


\begin{proof} 
 Given a set $S\subset \bbP^1_\Berk$, write $\delta S$ for its boundary. 
It is well-known that the boundary of a disjoint union of closed sets is equal to the disjoint union of boundaries. 
Combining the intermediate lemmas above, we obtain \[ J_\phi = \delta \mathcal{K}_\phi = \delta \left(\bigcup_{\zeta \in \mathrm{Max}(\phi)} \mathcal{D}(\zeta)\right) = \bigcup_{\zeta \in \mathrm{Max}(\phi)} \delta\mathcal{D}(\zeta) = \bigcup_{\zeta \in \mathrm{Max}(\phi)} \{\zeta\}\] and the proposition easily follows.  
\end{proof}

\subsection{Proof of Theorem \ref{thm Berk}} \label{ss-section 4.4}
Let $\phi\in\C_v[X]$ be a polynomial of degree $d$. 
Its leading coefficient is denoted by $\mathrm{lc}(\phi)$. 
The non-archimedean version of the maximum modulus principle claims that \[ \zeta_{a,r}(\phi):= \underset{z\in D(a,r)}{\mathrm{Sup}} \left\{\vert \phi(z)\vert_v \right\} = \underset{n\in\{0,\dots, d\}}{\mathrm{Max}} \left\{ r^n \left\vert \frac{\phi^{(n)}(a)}{n!}\right\vert_v \right\}, \] where $\phi^{(n)}$ is the $n$-th derivative of $\phi$, see \cite[Lemma 21]{KS09}.
We give below an application of this principle, which allows us to delimit the filled Julia set of $\phi$. 

\begin{lmm} \label{lmm 4.5}
Let $\phi\in\C_v[X]$ be a polynomial of degree $d\geq 2$, and let $\zeta_{a,r}$ be a point in $\mathcal{K}_\phi$ with $(a,r)\in\C_v \times \R_{\geq 0}$.
Then $r\leq \vert \mathrm{lc}(\phi) \vert_v ^{-1/(d-1)}$.   
\end{lmm}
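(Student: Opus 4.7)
The plan is to argue by contradiction using the non-archimedean maximum modulus principle stated just before the lemma. Suppose $r > c^{-1/(d-1)}$, where I write $c := |\mathrm{lc}(\phi)|_v > 0$; equivalently, $r^{d-1}c > 1$. It suffices to show that the sequence $(\zeta_{a,r}(\phi^m))_m$ is unbounded, since this contradicts $\zeta_{a,r} \in \mathcal{K}_\phi$.

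The key observation is that the maximum modulus principle can be applied directly to the iterate $\phi^m$, which is itself a polynomial of degree $d^m$. First I would check by a straightforward induction on $m$ that
\[
\mathrm{lc}(\phi^m) = \mathrm{lc}(\phi)^{\,1+d+\cdots+d^{m-1}} = \mathrm{lc}(\phi)^{(d^m-1)/(d-1)};
\]
indeed, writing $\phi(X)=\mathrm{lc}(\phi)X^d + \text{lower}$ and substituting shows at once that $\mathrm{lc}(\phi^{m+1}) = \mathrm{lc}(\phi) \cdot \mathrm{lc}(\phi^m)^d$. Then, isolating the $n=d^m$ term in the quoted expansion
\[
\zeta_{a,r}(\phi^m) \;=\; \max_{0 \le n \le d^m} r^n \left|\frac{(\phi^m)^{(n)}(a)}{n!}\right|_v,
\]
and using $(\phi^m)^{(d^m)}/(d^m)! = \mathrm{lc}(\phi^m)$, I obtain the lower bound
\[
\zeta_{a,r}(\phi^m) \;\ge\; r^{d^m}\, c^{(d^m-1)/(d-1)} \;=\; r \cdot \bigl(r^{d-1} c\bigr)^{(d^m-1)/(d-1)}.
\]

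Since $r^{d-1}c > 1$ by assumption, the right-hand side tends to $+\infty$ as $m\to +\infty$, so $(\zeta_{a,r}(\phi^m))_m$ is unbounded, contradicting $\zeta_{a,r}\in\mathcal{K}_\phi$. This forces $r \le c^{-1/(d-1)}$, which is the claim (the case $r=0$ being trivial). The only step requiring any computation is the leading-coefficient formula for $\phi^m$; after that, a single application of the maximum modulus principle to the iterate, keeping only the top-degree term, finishes the argument. I do not anticipate any real obstacle, and in particular this approach avoids having to analyze iterates of disc images one step at a time.
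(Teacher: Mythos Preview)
Your proof is correct and is essentially the same as the paper's: both compute $\mathrm{lc}(\phi^m)=\mathrm{lc}(\phi)^{(d^m-1)/(d-1)}$, isolate the top-degree term in the maximum modulus formula to get $\zeta_{a,r}(\phi^m)\ge r^{d^m}c^{(d^m-1)/(d-1)}$, and conclude from boundedness that $r\,c^{1/(d-1)}\le 1$. The only differences are cosmetic---you phrase it as a contradiction and factor the bound as $r\cdot(r^{d-1}c)^{(d^m-1)/(d-1)}$, whereas the paper argues directly and writes it as $(r\,c^{1/(d-1)})^{d^m}c^{-1/(d-1)}$.
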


\begin{proof}
Let $m$ be a positive integer and recall that $\phi^m$ denotes the $m$-fold iteration of $\phi$. It is a polynomial of degree $d^m$ with leading coefficient \[ \mathrm{lc}(\phi)^{1+d+\dots+d^{m-1}} = \mathrm{lc}(\phi)^{\frac{d^m-1}{d-1}}.\]  
The $d^m$-th derivative of $\phi^m$, which is therefore a constant polynomial, is equal to $(d^m)! \cdot \mathrm{lc}(\phi)^{\frac{d^m-1}{d-1}}$. 
The maximum modulus principle above leads to \[ \zeta_{a,r}(\phi^m) \geq r^{d^m} \vert \mathrm{lc}(\phi)\vert_v^{\frac{d^m-1}{d-1}} = \left(r \vert \mathrm{lc}(\phi)\vert^{\frac{1}{d-1}}_v\right)^{d^m} \vert \mathrm{lc}(\phi)\vert_v^{-\frac{1}{d-1}} . \]  
As the sequence $( \zeta_{a,r}(\phi^m))_m$ is bounded by definition of $\mathcal{K}_\phi$, we immediately infer that $r  \vert \mathrm{lc}(\phi)\vert^{1/(d-1)}_v \leq 1$ since $d\geq 2$. 
The lemma follows. 
\end{proof}

\subsection*{Proof of Theorem \ref{thm Berk}} 
Recall that our assumptions give $\phi(0)\neq 0,$ \[\mu_l\notin \frac{\log p}{e_v(K\vert\Q)}  \Z \; \; \;  \text{and} \; \; \; \mu_l\geq -\frac{\log \vert a_d\vert_v}{d-1}\] for some $l\in\{1,\dots,r\}$.
By \cite[Chapter 6, Theorem 3.1]{C86}, there is a root $a\in \overline{K_v} ^\times$ of $\phi(X)-X \in K_v[X]$ satisfying $-\log \vert a \vert_v=-\mu_l$. 
Thus, $\vert a\vert_v\notin p^{\Z/e_v(K\vert\Q)}$ and $\vert a\vert_v \geq \vert a_d\vert_v^{-1/(d-1)}$.
In addition, $\phi^m(a)=a$ for all positive integers $m$ and it follows that $a\in \mathcal{K}_\phi$. 
By Subsection \ref{ss-section 4.1}, we have $m_\phi(a)=\zeta_{a,r}$ for some real number $r\geq 0$. 
The last lemma asserts that $r\leq \vert a_d\vert_v^{-1/(d-1)}\leq \vert a\vert_v$. 
Next, Proposition \ref{prop 4.1} claims that $\zeta_{a,r}\in J_\phi$. 
Finally, the maximum modulus principle provides  \[ \zeta_{a,r}(X) = \mathrm{Max}\{r, \vert a \vert_v\} = \vert a \vert_v \notin p^{\Z/e_v(K\vert\Q)} \cup \{0\}.\]   
We now finish the proof by applying Theorem \ref{thm general} to $\zeta=\zeta_{a,r}$. \qed

\poubelle{\section{Conflict of interest statement} 

On behalf of all authors, the corresponding author states that there is no conflict of interest. 

\section{Data availability statement}

On behalf of all authors, the corresponding author states that our manuscript has no associated data. 

\section{Ethical Statement}
        On behalf of all authors, the corresponding author states that our manuscript satisfies all conditions for "Ethical Responsibilities of Authors" 
}


\begin{thebibliography}{0}
					
	
					\bibitem{ADZ14}
F. Amoroso, S. David and U. Zannier.
\textit{On fields with the property~({B})}, Proc. Amer. Math. Soc. $\mathbf{142}$ (2014), no.6,  1893--1910.
		
		\bibitem{AM16}
F. Amoroso and D. W. Masser, 
Lower bounds for the height in Galois extensions, Bull. Lond. Math. Soc. 48 (2016), no. 6, 1008--1012.
		
		\bibitem{AZ00}
F. Amoroso and U. Zannier.
\textit{A relative {D}obrowolski lower bound over abelian extensions}, Ann. Scuola Norm. Sup. Pisa Cl. Sci. (4), $\mathbf{29}$ (2000), no. 3, 711--727.
		
			
		\bibitem{BP05}
M. Baker and C. Petsche.
\textit{Global discrepancy and small points on elliptic curves}, Int. Math. Res. Not. (2005), no.61, 3791--3834.

\bibitem{BR10}
H. Baker and R. Rumely.
\textit{Potential Theory and dynamics on the Berkovich projective line}, Mathematical Surveys and Monographs. American Mathematical Society, Providence, RI, $\mathbf{159}$ (2010). 
		
		\bibitem{BS04}
M. Baker and J.H. Silverman.
\textit{A lower bound for the canonical height on abelian varieties over abelian extensions}, Math. Res. Lett. $\mathbf{11}$ (2004), no.2-3,  377--396.
		
		
		\bibitem{B90}
V.G. Berkovich.
\textit{Spectral theory and analytic geometry over non-Archimedean fields}, Mathematical Surveys and Monographs, American Mathematical Society, Providence, RI, $\mathbf{33}$ (1990). 
		
		\bibitem{BG06}
E. Bombieri and W. Gubler. 
\textit{Heights in Diophantine Geometry}, Cambridge University Press, Cambridge (1990). 
		
		
		\bibitem{BZ01}
E. Bombieri and U. Zannier.
\textit{A note on heights in certain infinite extensions of~$\mathbb{Q}$}, Atti Accad. Naz. Lincei Cl. Sci. Fis. Mat. Natur. Rend. Lincei (9) Mat. Appl. $\mathbf{12}$ (2001), 5--14.

			\bibitem{BL91}
			S. Bosch and W. Lütkebohmert. 
			\textit{Degenerating abelian varieties}, Topology $\mathbf{30}$ (1991), no. 4, 653--698.
			
			\bibitem{CS93}
			G.S. Call and J.H. Silverman. 
			\textit{Canonical heights on varieties with morphisms}, Compositio Math. $\mathbf{89}$ (1993), no. 2, 163--205.
			
			\bibitem{C86} 
			J.W.S. Cassels, 
			\textit{Local fields}, Cambridge University Press (1986).
			
			\bibitem{D97} 
			S. David. 
			\textit{Points de petite hauteur sur les courbes elliptiques}, J. Number Th. $\mathbf{64}$ (1997), no. 1, 104--129.  
			
			\bibitem{DH00} 
			S. David and M. Hindry. 
			\textit{Minoration de la hauteur de N\'eron-Tate sur les vari\'et\'es ab\'eliennes de type CM}, J. Reine Angew. Math. $\mathbf{529}$ (2000), 1--74.  
			
			
			\bibitem{FM15}
P. Fili and Z. Miner.
\textit{Equidistribution and the heights of totally real and totally {$p$}-adic numbers}, Acta Arith. $\mathbf{170}$ (2015), no.1, 15--25.


\bibitem{F22}
L. Frey. 
\textit{Height Lower Bounds in some non-Abelian Extensions}, Ann. Sc. Norm. Super. Pisa Cl. Sci. (5), $\mathbf{23}$ (2022), no.3, 1357--1393.
			
			
\bibitem{G16}
A. Galateau.
\textit{Small height in fields generated by singular moduli}, Proc. Amer. Math. Soc. $\mathbf{144}$ (2016), no.7,  2771--2786.
			
			\bibitem{GM17}
			A. Galateau and V. Mahé.
			\textit{Some consequences of Masser's counting theorem on elliptic curves}, Math. Z. $\mathbf{285}$ (2017), no. 1-2, 613--629. 
			
			
			\bibitem{G15}
R. Grizzard.
\textit{Relative Bogomolov extensions}, Acta. Arith. $\mathbf{170}$ (2015), no.1, 1--13.
						
			
			\bibitem{G07b}
			W. Gubler.
			\textit{The Bogomolov conjecture for totally degenerate abelian varieties}, Invent. Math. $\mathbf{169}$ (2007), no. 2, 377--400.
			
			
			\bibitem{GU10}
			W. Gubler. 
			\textit{Non-archimedean canonical measures on abelian varieties}, Compositio Math. $\mathbf{146}$ (2010), no. 3, 683--730.

			
			\bibitem{GU13}
			W. Gubler.
			\textit{A guide to Tropicalizations}, Contemp. Math. $\mathbf{589}$ (2013), 125--189.

			
			\bibitem{H72}
			D. Hanson.
			\textit{On the product of the primes}, Canad. Math. Bull. $\mathbf{15}$ (1972), 33--37.


\bibitem{H13}
P. Habegger.
\textit{Small height and infinite nonabelian extensions}, Duke Math. J. $\mathbf{162}$ (2013), no.11,  2027--2076.	
			

\bibitem{KS09}
			S. Kawaguchi and S. Silverman. 
			\textit{Nonarchimedean Green Functions and Dynamics on Projective Space}, Math. Z.  $\mathbf{262}$ (2009), 173--197.
			
			\bibitem{KS24}
			N. Kumar and S. Sahoo.
			\textit{Lehmer-type bounds and counting rational points of bounded heights on Abelian varieties}, Int. J. Number Theory $\mathbf{20}$ (2024), no. 8, 2125--2138.
		
			\bibitem{L21b}
			N. R. Looper.
			\textit{Dynamical uniform boundedness and the $abc$-conjecture}, Invent. math. $\mathbf{225}$ (2021), 1--44.
		
			\bibitem{L21}
			N. R. Looper.
			\textit{A Bogomolov property for the canonical height of maps with superattracting periodic points}, arXiv:2106.13003v1
			
			 
			\bibitem{M84}
			D. W. Masser.
			\textit{Small values of the quadratic part of the N\'eron-Tate height on an abelian variety}, Compositio Math. $\mathbf{53}$ (1984), no. 2, 153--170.
			
			\bibitem{M86} D. W. Masser, Letter to D. Bertrand, Nov. 17th (1986), \url{https://people.dm.unipi.it/lombardo/files/TorsionBounds/Masser - Letter to Daniel Bertrand.pdf}.
			
		\bibitem{P19}
A. Plessis.
\textit{Minoration de la hauteur de Weil dans un compositum de corps de rayon}, J. Number Theory, $\mathbf{205}$ (2019),  246--276.	
		
		
			\bibitem{P22}
A. Plessis.
\textit{Points de petite hauteur sur une variété semi-abélienne isotriviale de la forme $\mathbb{G}_m^n \times A$}, Bull. Lond. Math. Soc. $\mathbf{54}$ (2022), no. 6, 2278--2296
			
			\bibitem{P24}
A. Plessis. 
\textit{Bogomolov property of some infinite nonabelian extensions of a totally $v$-adic field}, Acta Arith. $\mathbf{213}$ (2024),  1--23.
			
				\bibitem{PT21}
J. Poineau and D. Turchetti
\textit{Berkovich Curves and Schottky Uniformization I: The Berkovich Affine Line}, Arithmetic and Geometry over Local Fields. Lecture Notes in Mathematics, Springer, Cham $\mathbf{2275}$ (2021),  179--223.

			
			\bibitem{P13}
			L. Pottmeyer. 
			\textit{Heights and totally real numbers}, Atti Accad. Naz. Lincei Cl. Sci. Fis. Mat. Natur.  $\mathbf{24}$ (2013), no. 4, 471--483.
			
			\bibitem{P15}
			L. Pottmeyer. 
			\textit{Heights of points with bounded ramification}, Ann. Sc. Norm. Super. Pisa Cl. Sci. (5) $\mathbf{14}$ (2015), no. 3, 965--981.
			
			\bibitem{P15b}
			L. Pottmeyer. 
			\textit{Heights and totally $p$-adic numbers}, Acta Arith.  $\mathbf{171}$ (2015), no. 3, 277--291.
			
			\bibitem{S73}
A. Schinzel.
\textit{On the product of the conjugates outside the unit circle of an algebraic number}, Acta Arith. $\mathbf{24}$ (1973), 385--399.


	\bibitem{YA}
			K. Yamaki
			\textit{Strict supports of canonical measures and applications to the geometric Bogomolov conjecture}, Compositio Math. $\mathbf{152}$ (2016), no. 5, 997--1040.

			
			\bibitem{Y08}
X. Yuan.
\textit{Big line bundles over arithmetic varieties}, Invent. Math. $\mathbf{173}$ (2008), 603--649.
				
				
				\bibitem{Z98}
S. Zhang.
\textit{Equidistribution of small points on abelian varieties}, Ann. Math. $\mathbf{147}$ (1998), no. 1, 159--165.		
		\end{thebibliography}
\end{document}